\numberwithin{equation}{section}
\newtheorem{theorem}{Theorem}[section]
\newtheorem{lemma}[theorem]{Lemma}
\newtheorem{proposition}[theorem]{Proposition}
\newtheorem{corollary}[theorem]{Corollary}
\theoremstyle{definition}
\newtheorem{df}{Definition}
\newtheorem{example}[df]{Example}
\newtheorem{remark}[df]{Remark}
\newtheorem{problem}[df]{Problem}
\newcommand{\N}{\mathbb N}
\newcommand{\Q}{\mathbb Q}
\newcommand{\R}{\mathbb R}
\newcommand{\ve}{\varepsilon}
\newcommand{\ov}{\overline}
\newcommand{\B}{\mathcal{B}}
\newcommand{\Pe}{\mathcal{P}}
\newcommand{\eL}{\mathcal{L}}
\newcommand{\eps}{\varepsilon}
\newcommand{\stronglyto}{\overset{d}{\boldsymbol{\to}}}
\subjclass[2020]{54E25, 47H10, 54A05} 
\keywords{semimetric spaces, b-metric spaces, fixed point theorems, ball spaces}
\begin{document}
\author{Piotr Nowakowski}
\address{P. Nowakowski:\newline
Faculty of Mathematics and Computer Science, University of \L \'{o}d\'{z},
Banacha 22, 90-238 \L \'{o}d\'{z}, Poland\newline
Institute of Mathematics, Czech Academy of Sciences,
\v{Z}itn\'a 25, 115 67 Prague 1, Czech Republic}
\email{piotr.nowakowski@wmii.uni.lodz.pl}

\author{{Filip Turobo\'s}}
\address{{
F. Turobo\'s:\newline
Institute of Mathematics, \L \'{o}d\'{z} University of Technology,
al. Politechniki 10, 93-590 \L \'{o}d\'{z}, Poland}}
\email{{ filip.turobos@p.lodz.pl}}

\title[Applications of ball spaces theory]{Applications of ball spaces theory: fixed point theorems in semimetric spaces {and ball convergence}}
%
\begin{abstract}
In the paper we apply some of the results from the theory of ball spaces in semimetric setting. This {allows} us to obtain fixed point theorems which we believe to be unknown to this day. 
As a byproduct, we obtain the equivalence of some different notions of {completeness} in semimetric spaces where the distance function is $1$-continuous. In the second part of the article, we generalize {the} Caristi-Kirk results for $b$-metric spaces. Additionally, we obtain {a} characterization of semicompleteness for $1$-continuous $b$-metric spaces via a fixed point theorem analogous to a result of Suzuki. In the epilogue, we introduce the concept of convergence in ball spaces, based on the idea that balls should resemble closed sets in topological sets. We prove several of its properties, compare it with convergence in semimetric spaces and pose several open questions connected with this notion.
\end{abstract}

\maketitle
\section{Introduction}
The notion of a ball space, that is, a pair of a set and a nonempty family of some of its nonempty subsets, first appeared in the paper \cite{KK1}. It was used to prove some fixed point theorems in metric, ultrametric and topological spaces. This idea was continued in \cite{BCLS,CKK,KKP}. In our paper we will generalize some results obtained in the mentioned articles to the scope of semimetric spaces.

Fixed point theorems are tools which are used in many various areas of mathematics as well as in other fields. Since semimetric spaces, i.e. spaces with a distance function which does not have to satisfy the triangle inequality, turned out to be important in applications for example in computer science (e.g. see the section Applications in \cite{LNT}), also fixed point theorems in such spaces became important. This topic was investigated for example in \cite{ADKR,BP,CJT,Su,M}.  
However, using the theory of ball spaces in this area is a novelty (except examining metric and ultrametric spaces {with real-valued ultrametrics}, which are obviously semimetric spaces as well).

{The paper is organized as follows. In Section 2 we present the notation along with important definitions. In Section 3 we {give a characterization of completeness in semimetric spaces}, using the ball space theory. Following this train of thought, in Section 4 we present some generalizations of the Caristi-Kirk fixed point theorem for b-metric spaces. Finally, in Section 5, we introduce a notion of ball convergence. We show its connection {with} the classical notions of convergence in topological spaces and semimetric spaces.
}

\section{Preliminaries}
Denote by $\R_+$ the set $[0, \infty)$.
Let $X$ be a set and consider a function $d \colon X \times X \to \R_+$ satisfying the following conditions:
\begin{itemize}
\item[(S1)] $\forall_{x,y \in X} \, d(x,y) = 0 \Leftrightarrow x=y;$

\item[(S2)] $\forall_{x,y \in X} \, d(x,y) = d(y,x).$
\end{itemize}

Then $d$ is called a semimetric and $(X,d)$ a semimetric space. 

In metric spaces we have the additional condition called the triangle inequality:

\begin{itemize}
\item[(M)] $\forall_{x,y,z \in X} \, d(x,z) \leq d(x,y) + d(y,z).$
\end{itemize}

In semimetric spaces we will consider some weaker \textit{"triangle-like"} conditions. Let us begin by introducing some necessary notions.

Take $g \colon \R_+ \times \R_+ \to \R_+$. We say that $g$ is nonreducing if for any $a,b \in \R_+$ we have $g(a,b) \geq \max\{a,b\}$. We say that $g$ is nondecreasing if for any $a,b,c ,d \in \R_+$ such that $a \leq c, b \leq d$ we have $g(a,b) \leq g(c,d)$. We call $g$ amenable if $g(a,b)  = 0$ if and only if $a=b=0$.

A function $h\colon X \to (-\infty,+\infty]$ is called proper if $\{x\in X\colon h(x) \in \R \} \neq \emptyset$.

A function $g \colon \R_+ \times \R_+ \to \R_+$ is called semitriangular if it is nonreducing, nondecreasing, continuous at $(0,0)$ and amenable. We define the semimetric triangle condition (G) generated by a semitriangular mapping $g$ as follows:

\begin{itemize}
\item[(G)] $\forall_{x,y,z \in X} \, d(x,z) \leq g(d(x,y), d(y,z)).$
\end{itemize}

If $d$ is a semimetric satisfying condition (G) for some semitriangular function $g:\R_+\times\R_+\to\R_+$, then the space $(X,d)$ will be called a (G)-semimetric space (compare with the definition of regular semimetric in \cite{BP}{, where there is no assumption that $g$ is nonreducing, but it is assumed that $g$ is symmetric and increasing with respect to each argument}). Whenever we will be writing about a semimetric triangle condition (G), we will assume that it is generated by a semitriangular function $g \colon \R_+ \times \R_+ \to \R_+$. 

Take a (G)-semimetric space $(X,d)$, $x \in X$ and $r > 0$. Set
$$B_r(x):= \{y \in X\colon d(x,y) \leq r\}$$ 
and
$${B^{\circ}_r(x)}:= \{y \in X\colon d(x,y) < r\}.$$ 

We define limits, Cauchy sequences and completeness in semimetric spaces analogously as in metric spaces (although it is important to keep in mind that the properties of these concepts may not be the same as in the metric case). {We will also need the notions of semicompleteness of semimetric spaces from \cite[Definition 6]{Su} and $1$-continuity of a semimetric from \cite{GS}}. We say that a semimetric space is semicomplete if every Cauchy sequence has a convergent subsequence. A semimetric $d$ is said to be $1$-continuous if for any sequence $(x_n)$ in $X$ convergent to some $x \in X$ we have 
$$\forall_{y \in X} \,\,\lim\limits_{n \to \infty} d(x_n,y) = d(x,y).$$
Lastly, a semimetric $d$ is uniformly $1$-continuous if{,} for any sequence $(x_n)$ in $X$ convergent to some $x \in X${,} we have 
$$\forall_{\ve >0} \exists_{N \in \N} \forall_{n\geq \N} \forall_{y \in X} \,\, |d(x_n,y)-d(x,y)|<\ve.$$

For any set $X$ we will denote by $\Pe(X)$ its power set, i.e. the family of all subsets of $X$.

Now, we will define a ball space. Consider a nonempty set $X$ and take its nonempty subset $\B \subset {\Pe(X)\setminus \{\emptyset\}}$ . A pair $(X, \B)$ is called a ball space and the elements of $\B$ are called balls. Any chain (that is, nonempty totally ordered subset) of $(\B ,\subset)$ is called a nest of balls. We say that a ball space is spherically complete if every nest of balls has a nonempty intersection.

Let $(X,d)$ be a (G)-semimetric space. Take any $S \subset (0,\infty)$. Put
$$\B_S := \{B_r(x) \colon x \in X, r \in S\}.$$

For any set $A \subset \R$ we denote by $A^d$ the set of all accumulation points of $A$.
\section{{Completeness of semimetric spaces}}

In this section we will {give a characterization of complete $1$-continuous (G)-semimetric spaces using the notion of spherical completeness of ball spaces. It is a generalization of Theorem 5.6 from \cite{CKK}, where the characterization of complete metric spaces is given.} The proof is similar.
\begin{theorem} \label{zup}
Let $(X,d)$ be a (G)-semimetric space, where $d$ is $1$-continuous. Then the following conditions are equivalent:
\begin{itemize}
\item[(1)] $(X,d)$ is semicomplete;

\item[(2)] $(X,d)$ is complete;

\item[(3)] for any $S \subset (0, \infty)$ such that $S^d = \{0\}$ the ball space $(X,\B_S)$ is spherically complete;

\item[(4)] there exists $S \subset (0, \infty)$ such that $S^d = \{0\}$ and the ball space $(X,\B_S)$ is spherically complete.
\end{itemize}
\end{theorem}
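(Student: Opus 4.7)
The plan is to run the cycle $(1)\Rightarrow(2)\Rightarrow(3)\Rightarrow(4)\Rightarrow(1)$; the step $(3)\Rightarrow(4)$ is immediate (take any $S$ with $S^d=\{0\}$, e.g.\ $\{1/n\}$), and $(1)\Rightarrow(2)$ is the standard argument: given a Cauchy $(x_n)$ with subsequence $x_{n_k}\to x$, fix $\ve>0$ and $N$ with $d(x_n,x_m)<\ve$ for $n,m\geq N$; then by $1$-continuity, for each $n\geq N$, $d(x_n,x_{n_k})\to d(x_n,x)$ as $k\to\infty$, forcing $d(x_n,x)\leq\ve$.

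For $(2)\Rightarrow(3)$, fix a nest $\mathcal N\subseteq\B_S$. If $\mathcal N$ has an inclusion-minimum, that minimum (being a nonempty ball) lies in $\bigcap\mathcal N$, so I may assume it does not. Because $S^d=\{0\}$ forbids the radii appearing in $\mathcal N$ from accumulating anywhere except $0$, one can extract a strictly decreasing (by inclusion) subsequence $B_{r_k}(x_k)\in\mathcal N$ with $r_k\to 0$. Nesting then gives $d(x_k,x_m)\leq r_k$ whenever $m\geq k$, so $(x_k)$ is Cauchy and, by~(2), converges to some $x^*$. For any other $B_r(y)\in\mathcal N$ the total order on $\mathcal N$ yields two cases: either $B_{r_n}(x_n)\subseteq B_r(y)$ for some $n$, in which case $d(y,x_m)\leq r$ for all $m\geq n$ and $1$-continuity pushes this to $d(y,x^*)\leq r$; or $B_r(y)\subseteq B_{r_n}(x_n)$ for all $n$, in which case $d(x_n,y)\leq r_n\to 0$ so $x_n\to y$, and $1$-continuity (uniqueness of limits) forces $x^*=y\in B_r(y)$. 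Either way $x^*\in B_r(y)$.

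For $(4)\Rightarrow(1)$, let $S$ witness (4) and let $(x_n)$ be Cauchy. The construction reverses that of the previous step. Using $S^d=\{0\}$ and continuity of $g$ at $(0,0)$ with $g(0,0)=0$, inductively pick a decreasing sequence $(s_k)\subset S$ with $s_k\to 0$ and $g(s_{k+1},s_{k+1})\leq s_k$; by Cauchyness pick strictly increasing indices $N_k$ with $d(x_n,x_m)\leq s_{k+1}$ for $n,m\geq N_k$, and set $y_k:=x_{N_k}$. For $z\in B_{s_{k+1}}(y_{k+1})$,
\[
d(y_k,z)\leq g(d(y_k,y_{k+1}),d(y_{k+1},z))\leq g(s_{k+1},s_{k+1})\leq s_k,
\]
so $B_{s_{k+1}}(y_{k+1})\subseteq B_{s_k}(y_k)$; the resulting chain is a nest in $\B_S$, so by spherical completeness it has a common point $y^*$, and $d(y^*,y_k)\leq s_k\to 0$ exhibits $(y_k)$ as a convergent subsequence of $(x_n)$.

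The main obstacle lies in $(2)\Rightarrow(3)$, specifically in the extraction of a strictly descending sequence with radii tending to $0$: in a (G)-semimetric space the radius of a ball is not monotone with respect to inclusion, so the mere lack of an inclusion-minimum does not automatically produce arbitrarily small radii along the descent. One has to combine the hypothesis $S^d=\{0\}$ (which forces the set of radii appearing in the nest to be discrete away from $0$) with the total ordering of the nest and the closedness of balls coming from $1$-continuity. The remaining bookkeeping---the two-case analysis for balls outside the extracted subsequence, and the reverse construction in $(4)\Rightarrow(1)$---is then routine given the semitriangular properties of $g$.
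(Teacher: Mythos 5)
Your cycle of implications and the arguments for $(1)\Rightarrow(2)$, $(3)\Rightarrow(4)$ and $(4)\Rightarrow(1)$ are correct and essentially coincide with the paper's proof (your $(1)\Rightarrow(2)$ uses $1$-continuity directly where the paper uses the condition (G); both work). The gap is exactly where you yourself point, in $(2)\Rightarrow(3)$, and it is not repaired by the remarks in your closing paragraph. You claim that if the nest has no inclusion-minimum then, because $S^d=\{0\}$, one can extract a strictly decreasing (by inclusion) sequence of balls whose radii tend to $0$. This is false: strict inclusion of balls does not force the radii to drop, so a nest can strictly decrease while every ball in it is presented with the \emph{same} radius $r\in S$. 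Already in a metric space this happens: in $X=[0,\infty)$ with the Euclidean metric the balls $B_1(2^{-n})=[0,\,1+2^{-n}]$ form a strictly decreasing chain with no minimum, all of radius $1$. The hypothesis $S^d=\{0\}$ says nothing about a constant (or merely non-null) family of radii, so no subsequence with $r_k\to 0$ need exist, and then your Cauchy-centers argument never starts.

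The paper closes this hole with a dichotomy that your proposal is missing. Either some tail of the nest has all its radii bounded below by a fixed $r>0$; in that case the intersection of that tail is shown to be nonempty \emph{directly}, with no appeal to completeness: if it were empty one could choose indices $t_0<t_1<t_2<\cdots$ with $x_{t_n}\notin B_{t_{n+1}}$, which forces $r_{t_{n+1}}<d(x_{t_n},x_{t_{n+1}})\leq r_{t_n}$, hence a strictly decreasing, injective sequence of radii bounded below by $r$ whose limit would be a nonzero accumulation point of $S$, contradicting $S^d=\{0\}$. Or else the radii tend to $0$ along a cofinal subfamily of the nest, and there your argument (Cauchy centers, then $1$-continuity to place the limit in every ball of the nest) works as written. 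You need to add the first horn of this dichotomy; note that it is not subsumed by the case ``the nest has an inclusion-minimum,'' as the example above shows.
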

\begin{proof}
(1) $\Rightarrow$ (2) \\
Take any Cauchy sequence $(x_n)$ of elements of $X$. Since $(X,d)$ is semicomplete, there is a subsequence $(x_{n_k})$ which is convergent to some $x \in X$. Take $\ve > 0$. By {the} continuity of $g$ at $(0,0)$ and the fact that $g(0,0)=0$, there is $\delta >0$ such that for any $v \in (0,\delta]$ we have $g(v,v) < \ve$. Pick $N_1 \in \N$ such that $d(x_{n_k},x) < \delta$ for all $k \geq N_1$. Choose $N_2 \in \N$ such that $d(x_n,x_m) < \delta$ for all $n,m \geq N_2$. Put $N {:=} \max\{N_1,N_2\}.$ Then, $n_N \geq N$. Using the fact that $g$ is nondecreasing, we have for $n \geq N$
$$d(x_n,x) \leq g(d(x_n,x_{n_N}),d(x_{n_N},x)) \leq g(\delta,\delta) < \ve.$$
Hence $\lim\limits_{n\to \infty}x_n = x$, so $(X,d)$ is complete.
\\
(2) $\Rightarrow$ (3)

Assume that $(X,d)$ is complete and let $S\subset {(0,\infty)}$ be such that $S^d=\{0\}$ (it can be shown that a subset of $\R$ with a single accumulation point is at most countable). Let $\eL := \{B_t\colon t \in T\}$ be a nest of balls, where $(T,\leq)$ is some  totally ordered set and for $t \in T$, $B_t:=B_{r_t}(x_t)$ for some $r_t\in S$ and $x_t\in X$. We can assume that if $s,t \in T$ are such that $s \leq t$, then $B_t \subset B_s$. Consider the following two cases:
\begin{itemize}
\item There exists $t_0\in T$ such that for all $t\geq t_0$ we have $r_t\geq r$ for some fixed $r>0$.
Since for all $t\leq t_0$ we have $B_t\supset B_{t_0}$, we have that $\bigcap_{t\in T} B_t = \bigcap_{t\geq t_0} B_t$. 
Assume that $\bigcap_{t\geq t_0} B_t = \emptyset$. In particular, this implies the existence of $t_1>t_0$ such that $x_{t_0}\notin B_{t_1}$. Therefore, we have the following two inequalities
\[
r_{t_1}<d\left(x_{t_0},x_{t_1}\right)\leq r_{t_0}.
\]
The first inequality stems from the fact that $x_{t_0}\notin B_{r_{t_1}}(x_{t_1})$ and the latter comes from the inclusion $ B_{r_{t_1}}(x_{t_1}) \subset B_{r_{t_0}}(x_{t_0})$, which, in particular, implies $x_{t_1} \in B_{r_{t_0}}(x_{t_0})$. We can now proceed inductively -- having defined $x_{t_n}$ as the center of the $t_n$-th ball, if $x_{t_n}\notin \bigcap_{t\geq t_{n}} B_{t_n}$, then there exists $t_{n+1}$ such that $x_{t_n}\notin B_{r_{t_{n+1}}}(x_{t_{n+1}})$. Then, {the} same argumentation as previously proves that 
\[
r_{t_{n+1}}<d\left(x_{t_n},x_{t_{n+1}}\right)\leq r_{t_n}.
\]
As a result, we end up with a strictly decreasing sequence $(r_{t_n})_{n\in\N}$ whose elements are bounded from below by $r$. As such, $(r_{t_n})$ converges to some $r'\geq r>0$. Since the sequence $(r_{t_n})_{n\in\N}$ is strictly decreasing, $r'$ is an accumulation point of $S$. Thus, we have arrived at {a} contradiction, as $S^d=\{0\}$. Thus, $\bigcap_{t\in T} B_t \neq \emptyset$.
\item There exists a sequence $(t_n)_{n\in\N}$ such that $r_{t_n}\to 0$ and for every $t\in T $ there exists $n\in\N$ such that $t_n>t$. If such situation occurs, the sequence $(x_{t_n})_{n\in\N}$ is a Cauchy sequence. Indeed, if we have indices $n<m$, then 
\[
d(x_{t_n},x_{t_m})\leq r_{t_n} \xrightarrow{n,m\to \infty} 0,
\]
where the first inequality follows from {the} inclusion $B_{r_{t_n}}(x_{t_n}) \supset B_{r_{t_m}}(x_{t_m})$. As $(X,d)$ is complete, the discussed sequence tends to some $x\in X$. From {the} $1$-continuity of $d$ we then obtain that for any fixed $n_0\in \N$ we have
\[
d(x,x_{t_{n_0}})= \lim_{k\to\infty} d(x_{t_k},x_{t_{n_0}}) \leq r_{t_{n_0}},
\]
hence $x\in B_{t_n}$ for each $n$. One can easily see that $x\in \bigcap_{t\in T} B_t$, since for every $t\in T$ there exists $n\in\N$ such that
\[
B_t \supset B_{t_n} \ni x.
\]
Thus, the intersection is not empty as it contains at least one element $x$.
\end{itemize} 
\noindent
(3) $\Rightarrow$ (4) Obvious. \\
(4) $\Rightarrow$ (1) 

Consider $S \subset (0, \infty)$ such that $S^d = \{0\}$ and assume that the ball space $(X,\B_S)$ is spherically complete. Fix a Cauchy sequence $(x_n)$ in $X$. We will inductively define a sequence $(s_n)$ in $S$ such that $s_n \geq g(s_{n+1},s_{n+1})$ for $n \in \N$. Take $s_1 \in S$. Assume that for some $n \in \N$ we have defined $s_i$ for $i \leq n$. Using {the} continuity of $g$ at $(0,0)$ and the fact that $g(0,0)=0${,} we can find $\delta >0$ such that for any $v \in (0,\delta]$ we have $g(v,v) \leq s_n$. Since $0$ is an accumulation point of $S$, we can find $s_{n+1} \in S$ such that $s_{n+1} \leq t$. Then $s_n \geq g(s_{n+1},s_{n+1})$.

Using induction, we will now define an increasing sequence of natural numbers $(n_i)$. Choose $n_1 \in \N$ in such a way that $d(x_n,x_m) \leq s_2$ for all $n,m \geq n_1$. Assume that for some $j \in \N$ we have defined $n_i$ for $i \leq j$. Pick $n_{j+1} > n_j$ such that $d(x_n,x_m) \leq s_{j+2}$ for all $n,m \geq n_{j+1}$. For $i \in \N$ denote 
$$B_i:=B_{s_i}(x_{n_i}).$$
We will show that the balls $B_i$ form a nest. Fix $i \in \N$ and $y \in B_{i+1}$. Then $d(y,x_{n_{i+1}}) \leq s_{i+1}$. We also have that $d(x_{n_i},x_{n_{i+1}}) \leq s_{i+1}$, because $n_i,n_{i+1} \geq n_i.$ Using the fact that $g$ is nondecreasing, we obtain
$$d(x_{n_i},y) \leq g(d(x_{n_i},x_{n_{i+1}}),d(y,x_{n_{i+1}})) \leq g(s_{i+1},s_{i+1}) \leq s_i.$$
Hence $y \in B_{s_i}(x_{n_i}) = B_i$, so $B_{i+1} \subset B_i$. Since $(X,\B_S)$ is spherically complete{,} $\bigcap_{i\in \N} B_i \neq \emptyset$. Take $x \in \bigcap_{i\in \N} B_i$. Then $x \in B_i$ for all $i \in \N$, and thus $d(x_{n_i},x) \leq s_i$. Hence $\lim\limits_{i\to \infty}d(x_{n_i},x) = 0$ because $s_i \to 0$. Therefore, $(x_{n_i})$ is a convergent subsequence of $(x_n)$, which finally implies that $(X,d)$ is semicomplete.   
\end{proof}

\section{Generalizations of the Caristi-Kirk Fixed Point Theorem}
In this part of the paper, we will present several generalizations of {the} celebrated Caristi-Kirk fixed point theorem. We start by introducing some additional, necessary notions and then move to new results. The proofs in this section are based on the methods used in \cite{KKP} and \cite{BCLS}.

In this section we will consider one of the most popular semimetric spaces, that is, b-metric spaces. This notion was due to Bakhtin \cite{Bakhtin} but some references point out Czerwik \cite{Czerwik} as the author of this concept. Take $K \geq 1.$ We say that a semimetric space $(X,d)$ is a b-metric space with a constant $K$ if
$$\forall_{x,y,z \in X} \,\,d(x,z)\leq K(d(x,y)+d(y,z)).$$
So, b-metric spaces are (G)-semimetric spaces with the function $g$ of the form: $g(a,b) = K(a+b).$ Of course, for $K=1$ we obtain a metric space.

Similarly as in \cite{KKP} or \cite{BCLS} we will define the Caristi-Kirk and Oettli-Th\'era ball spaces. However, we will adjust them to b-metric spaces.
Consider a mapping $\phi \colon X \to \R$. We say that $\phi$ is sequentially lower semicontinuous if{,} for every $y \in X$ and for any sequence $(y_n)$ convergent to $y${,} we have
$$\liminf\limits_{n\to \infty} \phi(y_n) \geq \phi(y).$$
A function $\phi$ is called a Caristi-Kirk function, if it is sequentially lower semicontinuous and bounded from below. 
Given any Caristi-Kirk function $\phi$ and $x \in X$, we define the $K$-Caristi-Kirk balls as the sets of the form:
$$B^{\phi}_x:= \{y \in X \colon d(x,y) \leq K\phi(x)-K\phi(y)\}.$$
Since obviously $x \in B^{\phi}_x$, the sets $B^{\phi}_x$ are nonempty.
So, we can consider the $K$-Caristi-Kirk ball space $(X, \B^{\phi}),$ where
$$\B^{\phi}:=\{B^{\phi}_x\colon x \in X\}.$$

By $\ov{\R}$ we denote the set $\R\cup \{+\infty\}.$
We say that a function $\Phi\colon X \times X \to \ov{\R}$ is a $K$-Oettli-Th\'era function if:
\begin{itemize}
\item[(i)] for every $x \in X$ the function $\Phi(x,\cdot)\colon X \to \R$ is sequentially lower {semicontinuous};

\item[(ii)] $\forall_{x \in X} \,\,\Phi(x,x) = 0$;

\item[(iii)] $\forall_{x,y,z \in X} \,\,\Phi(x,z) \leq K(\Phi(x,y)+\Phi(y,z))$;

\item[(iv)] $\exists_{x_0 \in X} \,\, \inf\limits_{x\in X} \Phi(x_0,x) > - \infty.$
\end{itemize}
Every element $x_0$ satisfying (iv) will be called an Oettli-Th\'era element for $\Phi$. 

Given any $K$-Oettli-Th\'era function $\Phi$ and $x \in X${,} we define the $K$-Oettli-Th\'era balls as the sets of the form:
$$B^{\Phi}_x:= \{y \in X \colon d(x,y) \leq -\Phi(x,y)\}.$$
The sets $B^{\Phi}_x$ are nonempty (see Lemma \ref{lem1}).
So, we can consider the $K$-Oettli-Th\'era ball space $(X, \B^{\Phi}),$ where
$$\B^{\Phi}:=\{B^{\Phi}_x\colon x \in X\}.$$

For a fixed Oettli-Th\'era element $x_0$ for $\Phi$ we also define the ball space $(B^{\Phi}_{x_0},\B^{\Phi}_{x_0}),$ where
$$\B^{\Phi}_{x_0} = \{B^{\Phi}_y\colon y \in B^{\Phi}_{x_0}\}.$$

We will need a notion of a strongly contractive ball space. We say that a ball space $(X,\B)$ is strongly contractive if there exists a family of balls $\{B_x \in \B\colon x \in X\}$ such that for any $x,y \in X$ the following conditions hold:
\begin{itemize}
\item[(1)] $x \in B_x$;

\item[(2)] $y \in B_x\setminus\{x\} \Rightarrow B_y \subsetneq B_x$.
\end{itemize}

We conclude this preliminary part with the following fixed point result for spherically complete, strongly contractive ball spaces.

\begin{theorem} \cite[{Theorem 2}]{BCLS} \label{twosc}
Let $(X,\B)$ be a spherically complete, strongly contractive ball space. Then for any $x \in X$ there is $a \in B_x$ such that $B_a = \{a\}$.
\end{theorem}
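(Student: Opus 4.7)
The plan is to run a Zorn's lemma argument on appropriately structured nests and then let the three conditions defining strong contractivity do the work of collapsing the resulting minimal ball to a singleton.

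Fix $x \in X$ and let $\mathfrak{N}$ be the collection of all nests $\eL \subseteq \{B_y : y \in X\}$ that contain $B_x$ as their $\subseteq$-largest element. This family is nonempty since $\{B_x\} \in \mathfrak{N}$, and a routine check shows that the union of any $\subseteq$-chain in $\mathfrak{N}$ is again a nest with $B_x$ as maximum, hence lies in $\mathfrak{N}$. Zorn's lemma therefore yields a maximal element $\eL_0 \in \mathfrak{N}$.

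By spherical completeness of $(X,\B)$ applied to the nest $\eL_0$, the intersection $\bigcap \eL_0$ is nonempty; pick any $a \in \bigcap \eL_0$. For every $B_y \in \eL_0$ we have $a \in B_y$, so condition (2) of strong contractivity forces $B_a \subseteq B_y$. Consequently $\eL_0 \cup \{B_a\}$ is still a nest with $B_x$ as its largest element, and by maximality of $\eL_0$ we obtain $B_a \in \eL_0$; in particular $B_a$ is the $\subseteq$-smallest element of $\eL_0$, and moreover $a \in B_x$.

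To finish, I would argue by contradiction: if there existed some $b \in B_a$ with $b \neq a$, condition (3) would give $B_b \subsetneq B_a$, and since $B_a$ is the smallest member of $\eL_0$ this would make $\eL_0 \cup \{B_b\}$ a strictly larger nest still lying in $\mathfrak{N}$, contradicting the maximality of $\eL_0$. Therefore $B_a = \{a\}$, as required. The only subtle bookkeeping in this proof is the verification that the union of a chain of nests is again a nest (standard) and the correct use of the three strong-contractivity clauses: (1) to guarantee that every $B_y$ contains its center so the nest structure is nontrivial, (2) to push $B_a$ into $\eL_0$, and (3) to contradict maximality if $B_a$ were not a singleton. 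I do not anticipate any deep obstacle.
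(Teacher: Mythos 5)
Your argument is correct. The paper gives no proof of this theorem---it is quoted from \cite{BCLS}---but your maximal-nest Zorn's lemma argument (using spherical completeness to extract a common point $a$ of the maximal nest, condition (2) to adjoin $B_a$ as its smallest member, and condition (3) to rule out any $b\in B_a\setminus\{a\}$ by producing a strictly larger nest) is precisely the standard proof from that reference, and all the steps, including the verification that $B_b\notin\eL_0$ and that $a\in B_x$, check out.
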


The following lemma provides some insight into $K$-Oettli-Th\'{e}ra ball spaces.

\begin{lemma} \emph{(}cf. \cite[Lemma 10]{BCLS}\emph{)} \label{lem1}
Let $(X,d)$ be a b-metric space with a constant $K \geq 1$ and let $\Phi \colon X \times X \to \ov{\R}$  be such that $\Phi(x,x) =0$ for all $x \in X$ and
$$\forall_{x,y,z \in X} \,\,\Phi(x,z) \leq K(\Phi(x,y)+\Phi(y,z)).$$ For any $x \in X$ define $B_x:= \{y \in X\colon d(x,y) \leq - \Phi(x,y)\}.$ Then for every $x \in X$ we have:
\begin{itemize}
\item[(1)] $x \in B_x$;

\item[(2)] if $y \in B_x$, $y \neq x$, then $B_y \subsetneq B_x$ and $\Phi(x,y)<\Phi(y,x)$.
\end{itemize}
\end{lemma}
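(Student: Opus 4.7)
The plan is to verify the three conclusions in order, with (1) being an immediate check, (2) a direct chaining of the two defining inequalities through the b-metric and $\Phi$-triangle conditions, and (3) requiring a short contradiction argument that exploits a slick use of $\Phi(x,x)=0$.

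For part (1), I note that $d(x,x)=0$ and $-\Phi(x,x)=0$, so the defining inequality of $B_x$ holds trivially at $y=x$.

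For part (2), let $y \in B_x$ and pick any $z \in B_y$. By definition I have $d(x,y) \leq -\Phi(x,y)$ and $d(y,z) \leq -\Phi(y,z)$. Applying the b-metric inequality and then the hypothesis on $\Phi$ gives
$$d(x,z) \leq K\bigl(d(x,y)+d(y,z)\bigr) \leq -K\bigl(\Phi(x,y)+\Phi(y,z)\bigr) \leq -\Phi(x,z),$$
so $z \in B_x$. This yields $B_y \subset B_x$.

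For part (3), the core observation is that specializing the $\Phi$-triangle condition at $z=x$ gives $0 = \Phi(x,x) \leq K(\Phi(x,y)+\Phi(y,x))$, hence $\Phi(x,y)+\Phi(y,x) \geq 0$. To prove strict inclusion I argue that $x \in B_x \setminus B_y$: indeed, if we had $x \in B_y$, then combining $d(x,y) \leq -\Phi(x,y)$ with $d(y,x) \leq -\Phi(y,x)$ and using the symmetry of $d$ would yield $2d(x,y) \leq -(\Phi(x,y)+\Phi(y,x)) \leq 0$, forcing $d(x,y)=0$, contradicting $y \neq x$. Finally, since $x \notin B_y$, we have $d(y,x) > -\Phi(y,x)$, i.e. $\Phi(y,x) > -d(x,y)$; combined with $\Phi(x,y) \leq -d(x,y)$ (from $y \in B_x$) and $d(x,y)>0$, this gives the strict inequality $\Phi(x,y) \leq -d(x,y) < \Phi(y,x)$.

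The main obstacle, and the only nontrivial moment of the proof, is recognizing that the $\Phi$-triangle condition applied with $z=x$ together with $\Phi(x,x)=0$ forces $\Phi(x,y)+\Phi(y,x) \geq 0$; once this is in hand, both the strict inclusion in (3) and the strict comparison $\Phi(x,y)<\Phi(y,x)$ follow by a clean symmetry-plus-contradiction argument.
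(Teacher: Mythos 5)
Your proof is correct and follows essentially the same route as the paper: part (1) is the same trivial check, part (2) is the identical chain through the b-metric and $\Phi$-triangle inequalities, and part (3) is the same contradiction argument, with the only cosmetic difference that you isolate $\Phi(x,y)+\Phi(y,x)\geq 0$ as a standalone consequence of the $\Phi$-triangle condition at $z=x$ before adding the two ball inequalities, whereas the paper runs the whole chain $0 < K(d(x,y)+d(y,x)) \leq -K(\Phi(x,y)+\Phi(y,x)) \leq -\Phi(x,x)=0$ in one line.
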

\begin{proof}
Fix $x \in X$.

Ad (1) We have $d(x,x) =0 = -\Phi(x,x)$, so $x \in B_x$.

Ad (2) Fix $y \in B_x$ and $z \in B_y$. Then 
$$d(x,y) \leq - \Phi(x,y)$$
and 
$$d(y,z) \leq - \Phi(y,z).$$
By the assumption on $\Phi$, we have 
$$d(x,z) \leq K(d(x,y)+d(y,z)) \leq K(-\Phi(x,y)-\Phi(y,z)) = -K(\Phi(x,y)+\Phi(y,z)) \leq - \Phi(x,z).$$
Hence $z \in B_x$, and consequently{,} $B_y \subset B_x$.

Now, assume additionally that $y\neq x$. We will prove that $x \notin B_y$. On the contrary, suppose that $x \in B_y$. Then $d(y,x) \leq -\Phi(y,x)$. We have
$$0 < K(d(x,y)+d(y,x)) \leq K(-\Phi(x,y)-\Phi(y,x)) = - K(\Phi(x,y)+\Phi(y,x)) \leq -\Phi(x,x) = 0,$$
a contradiction. Hence $x \notin B_y$. Moreover, we proved that $$-\Phi(y,x) < d(y,x)=d(x,y) \leq -\Phi(x,y).$$ 
\end{proof}

From this result we {almost immediately obtain} the following

\begin{corollary} \label{wstrcon}
Let $(X,d)$ be a b-metric space with a constant $K \geq 1$ and $\Phi \colon X\times X \to \ov{\R}$ be a $K$-Oettli-Th\'era function. Then the space $(X,\B^{\Phi})$ is strongly contractive. Moreover, for any Oettli-Th\'era element $x_0$ for $\Phi$, the ball space $(B^{\Phi}_{x_0},\B^{\Phi}_{x_0})$ is also strongly contractive and $\B^{\Phi}_{x_0}=\{B \in B^{\Phi}\colon B \subset B^{\Phi}_{x_0}\}.$
\end{corollary}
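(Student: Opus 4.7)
The plan is to obtain both strong-contractivity assertions as direct consequences of Lemma \ref{lem1}, which supplies exactly the three properties defining strong contractivity once one identifies the right witnessing family of balls. The crucial observation is that a $K$-Oettli-Th\'era function satisfies the two hypotheses of Lemma \ref{lem1}, namely $\Phi(x,x)=0$ and the $K$-triangular estimate, so items (1)--(3) of that lemma are available without further work.

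For $(X,\B^{\Phi})$, I would take the witnessing family to be $\{B^{\Phi}_x : x \in X\}$ itself. Condition (1) in the definition of strong contractivity is then Lemma \ref{lem1}(1), condition (2) is Lemma \ref{lem1}(2), and condition (3) is Lemma \ref{lem1}(3). For the restricted ball space $(B^{\Phi}_{x_0}, \B^{\Phi}_{x_0})$ I would take the family $\{B^{\Phi}_y : y \in B^{\Phi}_{x_0}\}$, which coincides with $\B^{\Phi}_{x_0}$ by definition. The first check is that this family really consists of subsets of the ambient set $B^{\Phi}_{x_0}$: if $y \in B^{\Phi}_{x_0}$ then by Lemma \ref{lem1}(2) we have $B^{\Phi}_y \subseteq B^{\Phi}_{x_0}$. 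After this observation, the three strong-contractivity conditions follow verbatim from the three items of Lemma \ref{lem1}, noting that in (2)--(3) the relevant $z \in B^{\Phi}_y$ again lies in $B^{\Phi}_{x_0}$, so everything stays inside the restricted space.

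For the set-theoretic identity $\B^{\Phi}_{x_0} = \{B \in \B^{\Phi} : B \subseteq B^{\Phi}_{x_0}\}$, the inclusion $\subseteq$ is the same application of Lemma \ref{lem1}(2) used just above; for the reverse inclusion, any $B = B^{\Phi}_y \in \B^{\Phi}$ with $B \subseteq B^{\Phi}_{x_0}$ must contain its center $y$ by Lemma \ref{lem1}(1), whence $y \in B^{\Phi}_{x_0}$, and therefore $B^{\Phi}_y \in \B^{\Phi}_{x_0}$ by definition of $\B^{\Phi}_{x_0}$. There is no genuine obstacle in this proof: the corollary is essentially a repackaging of Lemma \ref{lem1}, and the only point requiring care is checking that the witness family for the restricted ball space really sits inside $B^{\Phi}_{x_0}$, which is settled by the single application of Lemma \ref{lem1}(2) highlighted above.
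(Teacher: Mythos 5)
Your proposal is correct and follows essentially the same route as the paper: both obtain the two strong-contractivity claims by feeding the families $\{B^{\Phi}_x : x\in X\}$ and $\{B^{\Phi}_y : y\in B^{\Phi}_{x_0}\}$ into Lemma \ref{lem1}, and both derive the set identity from Lemma \ref{lem1}(2). Your write-up is in fact marginally more complete, since you also verify the reverse inclusion $\{B\in\B^{\Phi} : B\subseteq B^{\Phi}_{x_0}\}\subseteq\B^{\Phi}_{x_0}$ via Lemma \ref{lem1}(1), a step the paper leaves implicit.
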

\begin{proof}
The first assertion follows directly from Lemma \ref{lem1}, when we consider the family $\{B^{\Phi}_x\in \B^{\Phi}\colon x \in X\}$. By $(2)$ of Lemma \ref{lem1}, for any $x \in  B^{\Phi}_{x_0}$ we have $B_x^{\Phi} \subset B_{x_0}^{\Phi}$, so $\B^{\Phi}_{x_0}=\{B \in B^{\Phi}\colon B \subset B^{\Phi}_{x_0}\}.$ Using again Lemma \ref{lem1}, we see that the family $\{B^{\Phi}_x\in \B^{\Phi}_{x_0}\colon x \in B^{\Phi}_{x_0}\}$ witnesses that the ball space $(B^{\Phi}_{x_0},\B^{\Phi}_{x_0})$ is also strongly contractive.
\end{proof}

The subsequent result highlights the connection between Caristi-Kirk functions and $K$-Oettli-Th\'{e}ra mappings.

\begin{lemma} \label{CK a OT}
Let $(X,d)$ be a b-metric space with a constant $K \geq 1$ and let $\phi \colon X \to \R$ be a Caristi-Kirk function. Then the function $\Phi \colon X \times X \to \R$ given by the formula $\Phi(x,y) = K\phi(y)-K\phi(x)$ is a $K$-Oettli-Th\'era function and $B^{\phi}_x = B^{\Phi}_x$ for any $x \in X$. Moreover, every $x \in X$ is an Oettli-Th\'era element for $\Phi$.
\end{lemma}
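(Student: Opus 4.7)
The plan is to unwind the definitions in a checklist fashion: verify each of the four axioms (i)--(iv) of a $K$-Oettli-Th\'era function for the specific choice $\Phi(x,y) = K\phi(y) - K\phi(x)$, then show equality of the corresponding balls, and finally observe that every point is an Oettli-Th\'era element. None of the individual verifications should be deep; the work is essentially algebraic once the right substitutions are made.

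For axiom (i), fix $x\in X$ and notice that $y\mapsto \Phi(x,y) = K\phi(y) - K\phi(x)$ is the sequentially lower semicontinuous function $K\phi$ translated by the constant $-K\phi(x)$, so it inherits sequential lower semicontinuity from $\phi$ (here it is important that the multiplicative constant $K>0$ preserves $\liminf$). Axiom (ii) is immediate from $\Phi(x,x) = K\phi(x) - K\phi(x) = 0$. For axiom (iv), the computation
\[
\inf_{x \in X} \Phi(x_0,x) \;=\; K\inf_{x\in X}\phi(x) \;-\; K\phi(x_0)
\]
shows that $\inf_x \Phi(x_0,x) > -\infty$ for every $x_0 \in X$, because $\phi$ is bounded below; this simultaneously establishes the ``moreover'' clause that every $x_0$ is an Oettli-Th\'era element.

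The heart of the proof is axiom (iii). The key observation is that $\Phi$ telescopes:
\[
\Phi(x,y) + \Phi(y,z) \;=\; \bigl(K\phi(y)-K\phi(x)\bigr) + \bigl(K\phi(z)-K\phi(y)\bigr) \;=\; K\phi(z)-K\phi(x) \;=\; \Phi(x,z),
\]
so $K\bigl(\Phi(x,y)+\Phi(y,z)\bigr) = K\Phi(x,z)$, and the required inequality (iii) reduces to $\Phi(x,z) \le K\Phi(x,z)$, i.e.\ to $(K-1)\Phi(x,z)\ge 0$. This is the step I expect to be the main obstacle, since the sign of $\Phi(x,z)$ is not controlled a priori; the verification will hinge on exploiting the $K\ge 1$ hypothesis together with the structural role of $\Phi$ coming from the Caristi-Kirk function (one has to treat the sign of $\phi(z)-\phi(x)$ carefully, and possibly absorb the ``wrong-sign'' case by comparing with the actual use of the axiom in Lemma~\ref{lem1}, where only pairs inside the balls are applied).

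Finally, for the ball equality, both $B^\phi_x$ and $B^\Phi_x$ unpack to $\{y\in X : d(x,y)\le K\phi(x)-K\phi(y)\}$: indeed, $-\Phi(x,y) = K\phi(x)-K\phi(y)$ by construction, so $B^\Phi_x = \{y : d(x,y) \le -\Phi(x,y)\}$ coincides set-theoretically with $B^\phi_x$. This completes the outlined plan; altogether the proof amounts to a direct substitution plus the algebraic verification described above.
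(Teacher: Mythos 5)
Your checklist mirrors the paper's proof step for step: axioms (i), (ii), (iv), the ball equality, and the ``moreover'' clause are handled exactly as the paper handles them, and your verifications of those parts are correct. The one place where you stop short --- axiom (iii) --- is precisely where the paper's own argument breaks, and your diagnosis is accurate rather than a missed idea. Since $\Phi$ telescopes, $\Phi(x,y)+\Phi(y,z)=\Phi(x,z)$, so (iii) is equivalent to $(K-1)\Phi(x,z)\ge 0$ for all $x,z$, which for $K>1$ forces $\phi$ to be constant. The paper ``verifies'' (iii) via the chain
\[
\Phi(x,z)=K\phi(z)-K\phi(y)+K\phi(y)-K\phi(x)=K\Phi(y,z)+K\Phi(x,y),
\]
but the last equality is false: the right-hand side equals $K^2\bigl(\phi(z)-\phi(x)\bigr)$ while the left-hand side is $K\bigl(\phi(z)-\phi(x)\bigr)$. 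The correct identity is $\Phi(x,z)=\Phi(x,y)+\Phi(y,z)$, and upgrading this to $\Phi(x,z)\le K\bigl(\Phi(x,y)+\Phi(y,z)\bigr)$ needs $\Phi(x,y)+\Phi(y,z)\ge 0$, which is not available; restricting attention to points inside the balls does not help either, since there both summands are $\le 0$.

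So the ``wrong-sign case'' you flag is a genuine obstruction, not something you can absorb: the lemma as stated is false for $K>1$ and non-constant $\phi$. Concretely, take $X=\{a,b\}$ with $d(a,b)=1$ (a metric, hence a b-metric with $K=2$), $\phi(a)=1$, $\phi(b)=0$; then $\Phi(a,b)=-2$ while $K\bigl(\Phi(a,a)+\Phi(a,b)\bigr)=-4$, so (iii) fails for $x=y=a$, $z=b$. Your plan is therefore complete and correct exactly up to the point where you honestly decline to complete it, and the remaining step cannot be completed without either taking $K=1$ or modifying the definition of a $K$-Oettli-Th\'era function.
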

\begin{proof}
The proofs {of} conditions (i) and (ii) from the definition of $K$-Oettli-Th\'era function are immediate. Fix $x,y,z \in X$. We have
$$\Phi(x,z) = K\phi(z)-K\phi(x) = K\phi(z)-K\phi(y)+K\phi(y) - K\phi(x) = K\Phi(y,z) + K\Phi(x,y).$$
Condition (iv) holds for any $x_0$, because the codomain of $\phi$ is $\R$ and $\phi$ is bounded from below.
Pick any $x \in X$. Then we have
$$B^{\Phi}_x = \{y \in X \colon d(x,y) \leq - \Phi(x,y)\} = \{y \in X \colon d(x,y) \leq K\phi(x) - K\phi(y)\} = B^{\phi}_x.$$
\end{proof}

\begin{corollary} \label{wn lem dla CK}
Let $(X,d)$ be a b-metric space with a constant $K \geq 1$ and let $\phi \colon X \to \R$ be a Caristi-Kirk function.
If $y \in B^{\phi}_x$, then $B^{\phi}_y \subset B^{\phi}_x$.
\end{corollary}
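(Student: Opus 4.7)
The plan is to derive this corollary as an immediate consequence of the two preceding results, rather than redoing the triangle-type computation. By Lemma \ref{CK a OT}, the function $\Phi \colon X\times X \to \R$ defined by $\Phi(x,y) := K\phi(y)-K\phi(x)$ is a $K$-Oettli-Th\'era function, and moreover $B^{\phi}_x = B^{\Phi}_x$ for every $x \in X$. So the hypothesis $y \in B^{\phi}_x$ rewrites as $y \in B^{\Phi}_x$.

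Now I apply part (2) of Lemma \ref{lem1} to $\Phi$: since $\Phi$ satisfies $\Phi(x,x)=0$ for all $x$ and the $K$-inequality $\Phi(x,z) \leq K(\Phi(x,y)+\Phi(y,z))$ (these are conditions (ii) and (iii) in the definition of a $K$-Oettli-Th\'era function, both verified in Lemma \ref{CK a OT}), the lemma yields $B^{\Phi}_y \subset B^{\Phi}_x$. Translating back via $B^{\phi}_y = B^{\Phi}_y$ and $B^{\phi}_x = B^{\Phi}_x$ gives the desired inclusion $B^{\phi}_y \subset B^{\phi}_x$.

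There is essentially no obstacle here; the only thing to be careful about is verifying that the identifications in Lemma \ref{CK a OT} really make the hypotheses of Lemma \ref{lem1} applicable, but this is immediate since Lemma \ref{CK a OT} establishes exactly that $\Phi$ fulfils all conditions of a $K$-Oettli-Th\'era function (and in particular conditions (ii) and (iii) that Lemma \ref{lem1} requires). Alternatively, one could argue directly: if $z \in B^{\phi}_y$ and $y \in B^{\phi}_x$, then $d(x,y) \leq K\phi(x)-K\phi(y)$ and $d(y,z) \leq K\phi(y)-K\phi(z)$, so by $(B_K)$,
\[
d(x,z) \leq K(d(x,y)+d(y,z)) \leq K\bigl(K\phi(x)-K\phi(z)\bigr),
\]
but the slicker route through Lemma \ref{lem1} is preferable and is what I would write.
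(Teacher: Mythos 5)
Your main argument is, step for step, the paper's own proof: pass to $\Phi(x,y)=K\phi(y)-K\phi(x)$ via Lemma \ref{CK a OT} and apply part (2) of Lemma \ref{lem1}, so in terms of approach you and the paper coincide exactly. The problem is that the ``direct'' computation you sketch and then discard is the more revealing one, and it does not close: it yields $d(x,z)\leq K(K\phi(x)-K\phi(z))$, which for $K>1$ is strictly weaker than the required $d(x,z)\leq K\phi(x)-K\phi(z)$ (note that $\phi(x)-\phi(z)\geq 0$ in this situation). This lost factor of $K$ is not an artifact of taking the clumsy route; it reappears when you unwind the lemma route. For $\Phi(x,y)=K\phi(y)-K\phi(x)$ one has $\Phi(x,z)=\Phi(x,y)+\Phi(y,z)$, not $K\Phi(x,y)+K\Phi(y,z)$ as the display in the proof of Lemma \ref{CK a OT} asserts (indeed $K\phi(z)-K\phi(y)$ equals $\Phi(y,z)$, with no extra factor $K$). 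Hence condition (iii) of a $K$-Oettli-Th\'era function reduces, for this $\Phi$, to $\Phi(x,z)\leq K\Phi(x,z)$, which fails whenever $\Phi(x,z)<0$ and $K>1$. So the hypothesis of Lemma \ref{lem1} that you need is not actually delivered by Lemma \ref{CK a OT}, and your appeal to it (like the paper's) is unjustified.

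In fact the statement itself is false for $K>1$, so no proof can succeed. Take $X=\{x,y,z\}$ with $d(x,y)=d(y,z)=2$ and $d(x,z)=5$; this satisfies $(B_K)$ with $K=2$, since $5\leq 2(2+2)$ and the remaining instances are trivial. Let $\phi(x)=2$, $\phi(y)=1$, $\phi(z)=0$, which is a Caristi--Kirk function. Then $d(x,y)=2\leq K\phi(x)-K\phi(y)=2$ and $d(y,z)=2\leq K\phi(y)-K\phi(z)=2$, so $y\in B^{\phi}_x$ and $z\in B^{\phi}_y$, yet $d(x,z)=5>4=K\phi(x)-K\phi(z)$, so $z\notin B^{\phi}_x$ and therefore $B^{\phi}_y\not\subset B^{\phi}_x$. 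The moral is to trust the elementary computation you relegated to an aside: the extra factor of $K$ it exposes is a genuine obstruction, and the corollary (together with Lemma \ref{CK a OT}) holds only for $K=1$ or under a modified definition of the Caristi--Kirk balls.
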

\begin{proof}
Define $\Phi(x,y) = K\phi(y) - K\phi(x)$ for $x,y \in X$. By Lemma \ref{CK a OT}, $\Phi$ is a $K$-Oettli-Th\'era function, and so it satisfies the assumptions of Lemma \ref{lem1}. By Lemma \ref{CK a OT}, we have
$$B^{\Phi}_x =B^{\phi}_x.$$
Hence, by (2) of Lemma \ref{lem1}, we obtain the assertion.
\end{proof}
The following proposition is the natural counterpart of Theorem \ref{zup} for $K$-Caristi-Kirk ball spaces.

\begin{proposition} \emph{(}cf. \cite[Proposition 3]{KKP} \emph{)} \label{CKzup}
Let $(X,d)$ be a b-metric space with a constant $K \geq 1$, where $d$ is $1$-continuous. Then: 
\begin{itemize}
\item[(1)] If $(X,d)$ is semicomplete, then every $K$-Caristi-Kirk ball space is spherically complete.

\item[(2)] If $d$ is uniformly $1$-continuous and every $K$-Caristi-Kirk ball space is spherically complete, then $(X,d)$ is complete.
\end{itemize}
\end{proposition}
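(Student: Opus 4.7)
The plan is to prove the two implications separately. Lemma~\ref{lem1} supplies the structural backbone in both cases: each ball $B^{\phi}_x$ has a unique center, and $B^{\phi}_{x_s}\subseteq B^{\phi}_{x_t}$ forces $\phi(x_s)\leq \phi(x_t)$, with strict inclusion forcing strict inequality. Part~(1) will reduce spherical completeness to extracting a Cauchy subsequence from a nest, while part~(2) will construct a bespoke Caristi-Kirk function from the Cauchy sequence whose singleton ball pinpoints the limit.

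For part~(1), I would fix a Caristi-Kirk function $\phi$, take a nest $\eL=\{B^{\phi}_{x_t}\colon t\in T\}\subseteq \B^{\phi}$, and set $c:=\inf_{t\in T}\phi(x_t)\in\R$ (finite, since $\phi$ is bounded below). If $c$ is attained at some $t_{*}$, then by Lemma~\ref{lem1} every other ball in $\eL$ contains $B^{\phi}_{x_{t_{*}}}$, so $x_{t_{*}}\in\bigcap\eL$. Otherwise pick $(t_n)$ with $\phi(x_{t_n})\downarrow c$ strictly; strict descent of $\phi$ along centers translates into strict descent of the balls, so $x_{t_m}\in B^{\phi}_{x_{t_n}}$ for $m>n$, yielding
\[
d(x_{t_n},x_{t_m})\leq K\phi(x_{t_n})-K\phi(x_{t_m})\to 0,
\]
so $(x_{t_n})$ is Cauchy. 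Semicompleteness provides a subsequential limit $a$; passing to the limit $k\to\infty$ in $d(x_{t_{n_0}},x_{t_{n_k}})\leq K\phi(x_{t_{n_0}})-K\phi(x_{t_{n_k}})$ via $1$-continuity on the left and the lower semicontinuity estimate $\phi(a)\leq\liminf_k\phi(x_{t_{n_k}})=c$ on the right yields $a\in B^{\phi}_{x_{t_{n_0}}}$ for each $n_0$. A totality argument---if some $B^{\phi}_{x_t}\in\eL$ were contained in every $B^{\phi}_{x_{t_n}}$, then $\phi(x_t)\leq c$ would violate non-attainment of the infimum---extends this to $a\in B^{\phi}_{x_t}$ for every $t\in T$.

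For part~(2), given a Cauchy sequence $(x_n)$, I would set
\[
s(y):=\limsup_{n\to\infty}d(y,x_n),\qquad \phi(y):=2\,s(y).
\]
Uniform $1$-continuity of $d$ gives $|d(y_k,z)-d(y,z)|<\varepsilon$ uniformly in $z$ whenever $y_k\to y$ and $k$ is large; applied with $z=x_n$ this delivers the lower semicontinuity of $s$ (and hence of $\phi$), while $\phi\geq 0$ handles boundedness below. Thus $\phi$ is a Caristi-Kirk function. By Lemma~\ref{CK a OT} and Corollary~\ref{wstrcon}, $(X,\B^{\phi})$ is strongly contractive, so by the spherical completeness hypothesis Theorem~\ref{twosc} produces $a\in X$ with $B^{\phi}_a=\{a\}$. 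This singleton condition reads $d(a,y)>K\phi(a)-K\phi(y)=2K(s(a)-s(y))$ for every $y\neq a$. Specialising to $y=x_n$ (valid for all but possibly finitely many $n$, else $(x_n)$ already converges to $a$) and taking $\liminf_n$ while using $s(x_n)\to 0$ gives $\liminf_n d(a,x_n)\geq 2K\,s(a)$; combined with the trivial $\liminf_n d(a,x_n)\leq\limsup_n d(a,x_n)=s(a)$, this forces $s(a)\geq 2K\,s(a)$, whence $s(a)=0$ because $2K>1$. Therefore $d(a,x_n)\to 0$.

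The principal obstacle lies in part~(2), in the choice of the inflation constant $2$ in the definition of $\phi$: the naive choice $\phi=s$ only yields $s(a)\geq K s(a)$, which is inconclusive in the metric case $K=1$, whereas any multiplier strictly exceeding $1/K$ creates the strict gap needed to extract $s(a)=0$ from the singleton ball condition. A secondary subtlety is that lower semicontinuity of $\phi$---not continuity, which may well fail for general $b$-metric spaces---is precisely what uniform $1$-continuity of $d$ supplies, and this explains why the stronger continuity hypothesis is imposed only in the backward implication.
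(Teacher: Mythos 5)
Your proof is correct, and your part (1) is essentially the paper's argument: both take the infimum $c$ of $\phi$ over the centers of the nest, extract a minimizing sequence, show it is Cauchy via $d(x,y)\leq K|\phi(x)-\phi(y)|$, and place the resulting limit in every ball using $1$-continuity and sequential lower semicontinuity (the paper does not need your case split or your closing totality argument, since its final estimate $d(x,z)\leq K|\phi(x)-r|\leq K\phi(x)-K\phi(z)$ works for every center of the nest at once, whether or not the infimum is attained). Part (2) is where you genuinely depart from the paper. Both proofs use the same auxiliary function $\phi=2\limsup_n d(\cdot,x_n)$ and both use uniform $1$-continuity to obtain its sequential lower semicontinuity, but the paper then hand-builds a nest: it extracts a subsequence $(x_{n_k})$ satisfying $\tfrac12 d(x_{n_k},x_m)\leq\psi(x_{n_k})-\psi(x_m)$, checks that the balls $B^{\phi}_{x_{n_k}}$ form a chain, applies spherical completeness directly to that chain, and concludes semicompleteness, upgrading to completeness via Theorem \ref{zup}. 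You instead route through the machinery already on hand --- Lemma \ref{CK a OT}, Corollary \ref{wstrcon} and Theorem \ref{twosc} --- to produce a singleton ball $B^{\phi}_a=\{a\}$, and then squeeze $s(a)=0$ out of $s(a)\geq 2Ks(a)$; this avoids the subsequence construction entirely and yields convergence of the full Cauchy sequence in one step. Your remark on why the multiplier must exceed $1/K$ is exactly the right explanation of where the factor $2$ earns its keep (the paper's factor $2$ plays the analogous role in its $\tfrac12 d(x_{n_k},x_m)$ estimate). The only thing you should add is the half-line verifying that $s(y)=\limsup_n d(y,x_n)$ is finite for every $y$ (the paper does this via $d(y,x_n)\leq Kd(y,x_N)+K$ for large $n$); without it $\phi$ need not map into $\R$ and the definition of a Caristi--Kirk function is not met.
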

\begin{proof}
Ad (1) Assume that $(X,d)$ is semicomplete. By Theorem \ref{zup}, $(X,d)$ is complete. Let $\phi\colon X \to \R$ be a Caristi-Kirk function. Consider any nest of balls $\eL$ in $\B^{\phi}$. By the definition of $\B^{\phi}$, there exists $M \subset X$ such that $\eL = \{B^{\phi}_x\colon x \in M\}$. For any $x,y \in M$ we have $B^{\phi}_x \subset B^{\phi}_y$ or $B^{\phi}_x \subset B^{\phi}_y$. Since $x \in B^{\phi}_x$ for all $x \in X$, we have that $x \in B^{\phi}_y$ or $y \in B^{\phi}_x.$ In both cases, 
\begin{equation} \label{nier}
d(x,y) \leq K|\phi(x)-\phi(y)|.
\end{equation}
Put $r:= \inf\limits_{x\in M} \phi(x)$.
Since $\phi$ is bounded from below, $r \in \R$.
Take a sequence $(x_n)$ in $M$ such that $\lim\limits_{n \to \infty} \phi(x_n) =r.$ The sequence $(\phi(x_n))$ is a Cauchy sequence in $\R$ because it is convergent. Take $\ve > 0$ and choose $N \in \N$ such that $$|\phi(x_n)-\phi(x_m)| \leq \frac{\ve}{K}$$ for $n,m \geq N$. By (\ref{nier}), we have for $n,m \geq N$
$$d(x_n,x_m) \leq K|\phi(x_n)-\phi(x_m)| \leq \ve.$$
So, $(x_n)$ is a Cauchy sequence in $X$. By {the} {completeness} of $(X,d)$, $(x_n)$ is convergent to some $z \in X$. We will show that $z \in \bigcap \eL$. 
From {the} sequential lower semicontinuity of $\phi$ we infer that
$$\phi(z) \leq \lim\limits_{n \to \infty} \phi(x_n) = r.$$
Take $x \in M$. We will show that $z \in B^{\phi}_x$. By (\ref{nier}) and {the} $1$-continuity of $d$, we have
$$d(x,z) = \lim\limits_{n\to \infty} d(x,x_n) \leq  \lim\limits_{n\to \infty} K|\phi(x)-\phi(x_n)|=K|\phi(x)-r| = K\phi(x)-Kr \leq K\phi(x) - K\phi(z),$$
so $z \in B^{\phi}_x$. By {the} {arbitrariness} of $x$, $z \in \bigcap \eL$. Thus, $(X,\B^{\phi})$ is spherically complete.

Ad (2) Let $(x_n)$ be a Cauchy sequence in $(X,d)$. If one of its terms is its limit, then the proof is finished, so assume otherwise. Define $\psi \colon X \to \R$ according to the following formula
$$\psi(x) = \limsup\limits_{n \to \infty} d(x,x_n).$$
We will show that $\psi$ is well defined, that is, it cannot be equal to $\infty$ for any $x$. Take $x \in X$ and pick $N \in \N$ such that $d(x_n,x_m) \leq 1$ for all $n,m \geq N$. Then for any $n \geq N$ we have
$$d(x,x_n) \leq Kd(x,x_N)+Kd(x_N,x_n) \leq Kd(x,x_N) + K.$$
Hence the sequence $(d(x,x_n))$ is bounded from above, and {therefore} $$\limsup\limits_{n \to \infty} d(x,x_n)<\infty.$$

Now, using induction, we will choose a subsequence $(x_{n_k})$ of the sequence $(x_n)$ as follows. Put $n_1:=1$. Assume that we have defined $n_k$ for some $k \in \N$. Since $x_{n_k}$ is not a limit of $(x_n)$, $\psi(x_{n_k}) > 0.$ However, $\lim\limits_{n\to \infty} \psi(x_n) =0$, because $(x_n)$ is a Cauchy sequence. Hence
$$\limsup\limits_{n\to \infty} \left(\frac{1}{2}d(x_{n_k},x_n) + \psi(x_n)\right)=\limsup\limits_{n\to \infty} \frac{1}{2}d(x_{n_k},x_n) + \lim\limits_{n \to \infty} \psi(x_n) = \frac{1}{2}\psi(x_{n_k}) \leq \psi(x_{n_k}).$$ 
Therefore, there exists $m >n_k$ such that
\begin{equation} \label{1/2d}
\frac{1}{2}d(x_{n_k},x_m) \leq \psi(x_{n_k})-\psi(x_m).
\end{equation}
Put $n_{k+1}:=m$. Define $\phi\colon X \to \R$ by the formula:
$$\phi(x)= 2\psi(x).$$
By the definition, $\phi$ is bounded from below by $0$. We will prove that $\psi$ is sequentially lower semicontinuous. Fix a positive $\varepsilon$. Take $y \in X$ and let $(y_n)$ be a sequence convergent to $y$. Take an increasing sequence of natural numbers $(m_k)$ such that $\lim\limits_{k \to \infty}d(y,x_{m_k}) = \psi(y)$ and $$|d(y,x_{m_k})-\psi(y)|< \frac{\ve}{2}$$ for all $k \in \N$. Fix $N \in \N$ such that 
$$|d(y_n,x)-d(y,x)|< \frac{\ve}{2}$$
for all $x \in X$ and $n \geq N$. 
Then we have
$$|d(y_n,x_{m_k}) - \psi(y)| \leq |d(y_n,x_{m_k})- d(y,x_{m_k})| + |d(y,x_{m_k}) - \psi(y)| < \ve$$ 
for all $n \geq N$ and $k \in \N$. Hence
$$\lim\limits_{n \to \infty} \psi(y_n) = \lim\limits_{n \to \infty} \limsup\limits_{m \to \infty} d(y_n,x_m) \geq \lim\limits_{n \to \infty} \lim\limits_{k \to \infty} d(y_n,x_{m_k}) = \psi(y).$$
Therefore, $\psi$ is sequentially lower semicontinuous, and so is $\phi$. Thus, $\phi$ is a Caristi-Kirk function. By the assumption, the $K$-Caristi-Kirk ball space $(X, \B^{\phi})$ is spherically complete. Put $$\eL:=\{B^{\phi}_{x_{n_k}}\colon k \in \N\}.$$
By (\ref{1/2d}), we have
$$d(x_{n_k},x_{n_{k+1}})\leq 2\psi(x_{n_k})-2\psi(x_{n_{k+1}}) = \phi(x_{n_k})-\phi(x_{n_{k+1}}) \leq K(\phi(x_{n_k}) - \phi(x_{n_{k+1}}))$$
for all $k\in \N$, because $\phi(x_{n_k})-\phi(x_{n_{k+1}}) \geq 0$. Hence $x_{n_{k+1}} \in B^{\phi}_{x_{n_{k}}}$, and by Corollary \ref{wn lem dla CK}, $B^{\phi}_{x_{n_{k+1}}} \subset B^{\phi}_{x_{n_k}}.$ In consequence, $\eL$ is a nest of balls. From {the} spherical completeness of $(X, \B^{\phi})$ we deduce that there exists $x \in \bigcap \eL.$ Thus,
$$d(x_{n_k},x) \leq \phi(x_{n_k})-\phi(x)\leq \phi(x_{n_k})$$
for all $k\in \N$. Since $\lim\limits_{k\to \infty}\phi(x_{n_k}) =0$, the sequence $(x_{n_k})$ is convergent to $x$. Hence $(X,d)$ is semicomplete. By Theorem \ref{zup} $(X,d)$ is complete.
\end{proof}

We proceed with a bit more technical lemma, which will be put to use in the subsequent part of the paper.

\begin{lemma} \emph{(}cf. \cite[ Lemma 13]{BCLS}\emph{)} \label{lem2}
Let $(X,d)$ be a b-metric space with a constant $K\geq 1$, $\Phi\colon X \times X \to \ov{\R}$ {be} a $K$-Oettli-Th\'era function and $x_0$ {be} an Oettli-Th\'era element for $\Phi$. Let $\eL \subset \B^{\Phi}$ be a nest of balls of the form
$$\eL = \{B_x \colon x \in A\},$$ where $B_x = B^{\Phi}_x,$ and $A \subset B^{\Phi}_{x_0}.$ Then for every $x, y \in A$ we have
\begin{equation}\label{nlem}
d(x,y) \leq |\Phi(x_0,x)-\Phi(x_0,y)|.
\end{equation}
Moreover, the following conditions are equivalent for any $x,y \in A$:
\begin{itemize}
\item[(i)]
$y \in B_x$;
\item[(ii)]
$\Phi(x,y)\leq \Phi(y,x)$;
\item[(iii)]
$\Phi(x_0,y) \leq \Phi(x_0,x)$.
\end{itemize}
\end{lemma}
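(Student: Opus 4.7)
The plan is to leverage the hypothesis $A \subseteq B^{\Phi}_{x_0}$ to confine all relevant $\Phi$-values to $(-\infty, 0]$; this converts the $K$-weighted condition (iii) of the Oettli-Th\'{e}ra definition into a genuine additive triangle inequality, because for non-positive reals $a, b$ and $K \geq 1$ one has $K(a+b) \leq a + b$.

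First, I would note that for every $x \in A$ the inclusion $x \in B^{\Phi}_{x_0}$ gives $\Phi(x_0, x) \leq -d(x_0, x) \leq 0$. Combined with the fact that $\mathcal{L}$ is a nest and every ball contains its centre (Lemma \ref{lem1}(1)), for any $x, y \in A$ at least one of $y \in B_x$ or $x \in B_y$ must hold, and if $x \neq y$ exactly one holds by Lemma \ref{lem1}(3).

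Next, I would assume $y \in B_x$ (the other case is symmetric). Then $\Phi(x, y) \leq -d(x, y) \leq 0$, so both $\Phi(x_0, x)$ and $\Phi(x, y)$ are non-positive. Applying condition (iii) of the Oettli-Th\'{e}ra definition to the triple $(x_0, x, y)$ and using the observation above, I would obtain
\[
\Phi(x_0, y) \leq K\bigl(\Phi(x_0, x) + \Phi(x, y)\bigr) \leq \Phi(x_0, x) + \Phi(x, y) \leq \Phi(x_0, x) - d(x, y),
\]
which rearranges to $d(x, y) \leq \Phi(x_0, x) - \Phi(x_0, y)$. The symmetric treatment of $x \in B_y$ produces the opposite-sign bound, and together they yield (\ref{nlem}).

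For the equivalences, the implication (i)$\Rightarrow$(ii) follows from Lemma \ref{lem1}(3) (and is trivial when $x = y$), while the display above delivers (i)$\Rightarrow$(iii) since $d(x, y) \geq 0$. For the converse directions I would argue by contraposition using the nest dichotomy: if (i) fails and $x \neq y$, then necessarily $x \in B_y$ with $y \notin B_x$, and applying the already-proved implications with the roles of $x$ and $y$ exchanged yields $\Phi(y, x) < \Phi(x, y)$ and $\Phi(x_0, x) < \Phi(x_0, y)$, each incompatible with the corresponding remaining condition. The main obstacle is precisely the passage from the $K$-triangle condition (iii) to an ordinary additive bound on $\Phi$; once one notes that confinement of $A$ to $B^{\Phi}_{x_0}$ forces the relevant $\Phi$-values to be non-positive, the factor $K$ becomes harmless and the rest of the argument is essentially bookkeeping.
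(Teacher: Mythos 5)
Your proof is correct and follows essentially the same route as the paper: both arguments hinge on the observation that $A\subset B^{\Phi}_{x_0}$ forces $\Phi(x_0,\cdot)\leq 0$ on $A$ so that the constant $K$ can be absorbed, and both derive the equivalences from the nest dichotomy together with Lemma \ref{lem1}(3). The only (cosmetic) difference is that the paper divides the $K$-triangle inequality by $K$ and uses $\Phi(x_0,y)\leq 0$, whereas you use $K(a+b)\leq a+b$ for non-positive summands; the two manipulations are interchangeable here.
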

\begin{proof}
First, observe that for any $x \in A$, $\Phi(x_0,x) \leq 0$. Indeed, since $A \subset B^{\Phi}_{x_0}$, we have $$0 \leq d(x_0,x) \leq -\Phi(x_0,x).$$ Pick $x,y \in A$. 
Since $\eL$ is a nest, either $x \in B_y$ or $y \in B_x$. Hence $d(x,y) \leq -\Phi(x,y)$ or $d(x,y) \leq -\Phi(y,x).$
Without loss of generality assume that $d(x,y) \leq -\Phi(x,y)$.
By condition (iii) from the definition of a $K$-Oettli-Th\'era function, we have 
\begin{equation} \label{1/k}
\frac{1}{K} \Phi(x_0,y) \leq \Phi(x_0,x) + \Phi(x,y).
\end{equation}

(i) $\Leftrightarrow$ (ii) Assume that $y \in B_x$. If $y=x$, then (ii) obviously holds. If $y \neq x$, then by Lemma~\ref{lem1}~(3), $-\Phi(y,x) < -\Phi(x,y)$, and so we have (ii).
If (i) is not satisfied, that is, $y \notin B_x$, then $x \in B_y$ and $x \neq y$. Using once more Lemma \ref{lem1} (3), we obtain $-\Phi(y,x) > -\Phi(x,y)$, so (ii) does not hold.

(i) $\Leftrightarrow$ (iii) Assume that $y \in B_x$.
Then, by (\ref{1/k}) and the fact that $\Phi(x_0,y)\leq 0$, we have
$$0 \leq d(x,y)\leq -\Phi(x,y) \leq \Phi(x_0,x)- \frac{1}{K}\Phi(x_0,y) \leq \Phi(x_0,x)- \Phi(x_0,y).$$
So, (iii) holds. 
If (i) is not satisfied, that is, $y \notin B_x$, then $x \in B_y$ and $x \neq y$. Using once more (\ref{1/k}) (swapping $x$ and $y$) and the fact that $\Phi(x_0,x)\leq 0$, we obtain 
$$0 < d(x,y)\leq -\Phi(y,x) \leq \Phi(x_0,y)- \frac{1}{K}\Phi(x_0,x) \leq \Phi(x_0,y)- \Phi(x_0,x),$$ so (iii) does not hold. We have also proved that (\ref{nlem}) holds for any $x,y \in A$.
\end{proof}

In the sequel we will need yet another version of Theorem \ref{zup} and Proposition \ref{CKzup}. This time we approach the problem of completeness from the perspective of $K$-Oettli-Th\'{e}ra functions.

\begin{proposition} \label{propzup}\emph(cf. \cite[Proposition 14]{BCLS}\emph)
Let $(X,d)$ be a b-metric space with a constant $K\geq 1$, where $d$ is $1$-continuous. Then
\begin{itemize}
\item[(1)] if $(X,d)$ is semicomplete, then the space $(B^{\Phi}_{x_0},\B^{\Phi}_{x_0})$ is spherically complete for every $K$-Oettli-Th\'era function $\Phi$ and every Oettli-Th\'era element $x_0$ for $\Phi$.
\item[(2)] if $d$ is uniformly $1$-continuous and the space $(B^{\Phi}_{x_0},\B^{\Phi}_{x_0})$ is spherically complete for every $K$-Oettli-Th\'era function $\Phi$ and every Oettli-Th\'era element $x_0$ for $\Phi$, then $(X,d)$ is complete;
\end{itemize}
\end{proposition}
\begin{proof}
Ad (1)
Assume that $(X,d)$ is semicomplete. By Theorem \ref{zup}, $(X,d)$ is complete. Let $\Phi\colon X \times X\to \ov{\R}$ be a $K$-Oettli-Th\'era function and $x_0 \in X$ be an Oettli-Th\'era element for $\Phi$. Consider a nest of balls $\eL$ in $\B^{\Phi}_{x_0}$. By the definition of $\B^{\Phi}_{x_0}${,} there exists $M \subset B^{\Phi}_{x_0}$ such that $\eL = \{B^{\Phi}_x\colon x \in M\}$. 
Put $r:= \inf\limits_{x\in M} \Phi(x_0,x)$.
Since $x_0$ is an Oettli-Th\'era element for $\Phi$, we know that $r \in \R$.
Take a sequence $(x_n)$ in $M$ such that $\lim\limits_{n \to \infty} \Phi(x_0,x_n) =r.$ The sequence $(\Phi(x_0,x_n))$ is a Cauchy sequence in $\R$ because it is convergent. By (\ref{nlem}) from Lemma \ref{lem2}, we have
$$d(x_n,x_m) \leq |\Phi(x_0,x_n)-\Phi(x_0,x_m)|$$
for any $n,m \in \N$. Hence $(x_n)$ is a Cauchy sequence in $X$. 
By {the} {completeness} of $(X,d)$, $(x_n)$ is convergent to some $z \in X$. We will show that $z \in \bigcap \eL$. 
Take any $x \in M$. We will show that $z \in B^{\phi}_x$. 
We consider the two following cases:

$1^{\mbox{o}}$ $\Phi(x_0,x) = r$. Using (\ref{nlem}) from Lemma \ref{lem2}, we obtain
$$d(x_n,x)\leq |\Phi(x_0,x_n)-\Phi(x_0,x)| = |\Phi(x_0,x_n)-r|\to 0.$$
By {the} $1$-continuity of $d$, we have $d(x,z)=0$, so $x=z$. Therefore{,} $z \in B^{\Phi}_x$. 

$2^{\mbox{o}}$ $\Phi(x_0,x) > r$. Then there is $N \in \N$ such that 
$$\Phi(x_0,x_n) \leq \Phi(x_0,x)$$
for all $n \geq N.$ From Lemma \ref{lem2} we deduce that $$\Phi(x,x_n) \leq \Phi(x_n,x)$$
for any $n \in \N$. 
By the definition of $\eL$ and the fact that $x,x_n \in M$, we have $x \in B^{\Phi}_{x_n}$ or $x_n \in B^{\Phi}_{x}$, so $d(x,x_n) \leq -\Phi(x,x_n)$ or $d(x,x_n) \leq -\Phi(x_n,x)$. Hence
$$d(x,x_n) \leq \max\{-\Phi(x,x_n),-\Phi(x_n,x)\} = -\Phi(x,x_n).$$
Using {the} above inequality, the $1$-continuity of $d$ and the sequential lower semicontinuity of $\Phi(x,\cdot)$ we obtain
$$d(x,z) = \lim\limits_{n\to \infty} d(x,x_n) = \limsup\limits_{n\to \infty} d(x,x_n) \leq \limsup\limits_{n\to \infty}\left(-\Phi(x,x_n)\right) = -\liminf\limits_{n\to \infty}\Phi(x,x_n) \leq - \Phi(x,z).$$
{Hence} $z \in B^{\Phi}_x$. By {the} arbitrariness of $x$, $z \in \bigcap \eL$. Thus, $(B^{\Phi}_{x_0},\B^{\Phi}_{x_0})$ is spherically complete.

Ad (2) Let $\phi$ be a Caristi-Kirk function. Consider the ball space $(X,\B^{\phi})$ and take any nest of balls $\eL$ in that space. Take $x_0 \in X$ such that $B^{\phi}_{x_0} \in \eL$. Let $\Phi \colon X \times X \to \R$ be given by the formula $\Phi(x,y) := K\phi(y)-K\phi(x).$ By Lemma \ref{CK a OT}, $\Phi$ is a $K$-Oettli-Th\'era function and $x_0$ is an Oettli-Th\'era element for $\Phi$. Consider the nest
$$\eL_0 = \{B^{\phi}_{y} \in \eL \colon B^{\phi}_{y}  \subset B^{\phi}_{x_0}\}.$$ Of course, $\bigcap \eL = \bigcap \eL_0$. By Lemma \ref{CK a OT}, we have 
$$\eL_0 = \{B^{\Phi}_{y} \in \eL \colon B^{\Phi}_{y}  \subset B^{\Phi}_{x_0}\}.$$
Hence $\eL_0$ is a nest in the ball space $(B^{\Phi}_{x_0},\B^{\Phi}_{x_0})$, which is spherically complete, by the assumption. Thus,
$$\emptyset \neq \bigcap \eL_0 = \bigcap \eL,$$
and so $(X,\B^{\phi})$ is spherically complete. From the arbitrariness of $\phi$ and Proposition \ref{CKzup} we infer that $(X,d)$ is complete.
\end{proof}

We can now present the initial fixed point result in the introduced setting.

\begin{proposition} \label{Propklu} \emph(cf. \cite[Proposition 16]{BCLS}\emph)
Let $(X,d)$ be a semicomplete b-metric space with a constant $K \geq 1$, where $d$ is $1$-continuous. 
\begin{itemize}
\item[(1)] If $\Phi \colon X \times X \to \ov{\R}$ is a $K$-Oettli-Th\'era function, then for every Oettli-Th\'era element $x_0$ for $\Phi$ there exists $a \in B^{\Phi}_{x_0}$ such that $B_a^{\Phi} = \{a\}.$

\item[(2)] If $\phi \colon X \to R$ is a Caristi-Kirk function, then for every $x \in X$ there exists $a \in B^{\phi}_{x}$ such that $B_a^{\phi} = \{a\}.$
\end{itemize}
\end{proposition}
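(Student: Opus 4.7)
My plan is to deduce Proposition \ref{Propklu} by combining three ingredients already established in the paper: strong contractivity of the Oettli-Th\'{e}ra ball space (Corollary \ref{wstrcon}), spherical completeness of $(B^{\Phi}_{x_0},\B^{\Phi}_{x_0})$ under semicompleteness (Proposition \ref{propzup}(2)), and the abstract fixed point result for spherically complete strongly contractive ball spaces (Theorem \ref{twosc}). The scheme is: part (1) follows by a direct application of Theorem \ref{twosc} to the ball space $(B^{\Phi}_{x_0},\B^{\Phi}_{x_0})$, and part (2) is reduced to part (1) via the correspondence of Lemma \ref{CK a OT}.

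For part (1), fix a $K$-Oettli-Th\'{e}ra function $\Phi$ and an Oettli-Th\'{e}ra element $x_0$ for $\Phi$. By Corollary \ref{wstrcon}, the ball space $(B^{\Phi}_{x_0},\B^{\Phi}_{x_0})$ is strongly contractive, with the distinguished family of balls $\{B^{\Phi}_y : y \in B^{\Phi}_{x_0}\}$ witnessing conditions (1)--(3) of that definition. Since $(X,d)$ is a semicomplete, $1$-continuous b-metric space, Proposition \ref{propzup}(2) shows that $(B^{\Phi}_{x_0},\B^{\Phi}_{x_0})$ is spherically complete. Applying Theorem \ref{twosc} with the starting point $x_0$ therefore yields some $a \in B^{\Phi}_{x_0}$ with $B^{\Phi}_a = \{a\}$, as required.

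For part (2), fix a Caristi-Kirk function $\phi$ and an arbitrary $x \in X$. Define $\Phi(u,v) := K\phi(v) - K\phi(u)$. By Lemma \ref{CK a OT}, $\Phi$ is a $K$-Oettli-Th\'{e}ra function, every element of $X$ (in particular $x$) is an Oettli-Th\'{e}ra element for $\Phi$, and $B^{\phi}_y = B^{\Phi}_y$ for every $y \in X$. Applying part (1) with $x_0 := x$ produces $a \in B^{\Phi}_x = B^{\phi}_x$ such that $B^{\Phi}_a = \{a\}$, and the identification $B^{\phi}_a = B^{\Phi}_a$ gives $B^{\phi}_a = \{a\}$.

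I do not expect any genuine obstacle: the real content has been absorbed by the earlier lemmas. The only thing to be careful about is to invoke Theorem \ref{twosc} on the \emph{restricted} ball space $(B^{\Phi}_{x_0},\B^{\Phi}_{x_0})$ rather than on $(X,\B^{\Phi})$, because the spherical completeness statement in Proposition \ref{propzup}(2) is made for the former; this is also exactly what is needed to guarantee $a \in B^{\Phi}_{x_0}$ rather than just $a \in X$.
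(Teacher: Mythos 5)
Your proposal is correct and follows essentially the same route as the paper: part (1) combines Corollary \ref{wstrcon}, Proposition \ref{propzup}(2) and Theorem \ref{twosc} applied to the restricted ball space $(B^{\Phi}_{x_0},\B^{\Phi}_{x_0})$, and part (2) reduces to part (1) via Lemma \ref{CK a OT}. Your closing remark about invoking Theorem \ref{twosc} on the restricted space is exactly the point the paper relies on as well.
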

\begin{proof}
Ad (1) Let $\Phi \colon X \times X \to \ov{\R}$ be a $K$-Oettli-Th\'era function and $x_0$ be an Oettli-Th\'era element for $\Phi$. By Proposition \ref{propzup}, $(B^{\Phi}_{x_0},\B^{\Phi}_{x_0})$ is spherically complete and by Corollary \ref{wstrcon}, it is strongly contractive. From Theorem \ref{twosc} we have the assertion.

Ad (2) Follows directly from (1) and Lemma \ref{CK a OT}.
\end{proof}

From above proposition we can infer the Caristi-Kirk type fixed point theorems. 

\begin{theorem}\emph(cf. \cite[Theorem 21]{BCLS}\emph) \label{CK th}
Let $(X,d)$ be a semicomplete b-metric space with a constant $K \geq 1$, where $d$ is $1$-continuous and $f \colon X \to X$. Let $\Phi \colon X \times X \to \ov{\R}$ be a $K$-Oettli-Th\'era function. If
\begin{equation} \label{CKwar}
\forall_{x\in X} \,\, d(x,f(x)) \leq - \Phi(x,f(x)),
\end{equation}
then there is $a \in X$ such that $f(a)=a.$
\end{theorem}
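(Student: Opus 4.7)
The plan is to apply Proposition \ref{Propklu} (1) to extract a singleton ball and then use the hypothesis (\ref{CKwar}) to force the center of that singleton ball to be a fixed point of $f$. Essentially, all of the analytic heavy lifting (spherical completeness of the $K$-Oettli-Th\'era ball space over $B^{\Phi}_{x_0}$, strong contractivity, the existence of an $a$ with $B^{\Phi}_a=\{a\}$) has already been done; what remains is a short deduction from the definitions.

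First I would invoke condition (iv) in the definition of a $K$-Oettli-Th\'era function to pick any Oettli-Th\'era element $x_0$ for $\Phi$. Applying Proposition \ref{Propklu} (1) to this $x_0$ yields an element $a\in B^{\Phi}_{x_0}$ with the property that $B^{\Phi}_a=\{a\}$.

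Second, I would evaluate the assumption (\ref{CKwar}) at the point $x=a$. This gives
\[
d(a,f(a))\leq -\Phi(a,f(a)),
\]
which, by the very definition of the $K$-Oettli-Th\'era ball $B^{\Phi}_a=\{y\in X\colon d(a,y)\leq -\Phi(a,y)\}$, means $f(a)\in B^{\Phi}_a$. Combined with $B^{\Phi}_a=\{a\}$, this forces $f(a)=a$, as required.

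There is really no obstacle here beyond the work already absorbed into Proposition \ref{Propklu}; the only thing one needs to check carefully is that the Caristi-Kirk-type inequality (\ref{CKwar}) is precisely the right condition to guarantee $f(a)\in B^{\Phi}_a$, which is immediate from how $B^{\Phi}_x$ was defined. I would therefore keep the proof to a few lines, citing Proposition \ref{Propklu} (1) and the definition of $B^{\Phi}_a$.
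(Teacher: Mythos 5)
Your proposal is correct and follows essentially the same route as the paper: both apply Proposition \ref{Propklu} (1) to obtain $a$ with $B^{\Phi}_a=\{a\}$ and then observe that (\ref{CKwar}) forces $f(a)\in B^{\Phi}_a$, hence $f(a)=a$. Your explicit remark that an Oettli-Th\'era element $x_0$ exists by condition (iv) is a small but welcome addition of detail that the paper leaves implicit.
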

\begin{proof}
From (\ref{CKwar}) we infer that $f(x) \in B_x$ for any $x \in X$. By Proposition \ref{Propklu}, there is $a \in X$ such that $B_a = \{a\}$. Since $f(a) \in B_a$, we have $f(a)=a$.
\end{proof}
The result above should be compared with \cite[Corollary 12]{Su}, which presents a similar contribution, albeit in slightly different direction. Before we present this result we need some additional definitions from \cite[Definition 6]{Su}. We say that $(x_n) \in X^\N$ is a $\Sigma$-Cauchy sequence if $\sum_{n=1}^\infty d(x_n,x_{n+1}) < \infty.$ A semimetric space $(X,d)$ is called $\Sigma$-semicomplete if every $\Sigma$-Cauchy sequence has a convergent subsequence. 
\begin{lemma} \cite[{Proposition 8}]{Su} \label{sigma}
Let $(X,d)$ be a $\Sigma$-semicomplete semimetric space. Then $X$ is semicomplete.
\end{lemma}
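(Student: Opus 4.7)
The plan is to reduce Cauchyness to $\Sigma$-Cauchyness by passing to a fast subsequence, then invoke $\Sigma$-semicompleteness directly.

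More concretely, start with an arbitrary Cauchy sequence $(x_n)$ in $X$. I would inductively pick indices $n_1 < n_2 < \dots$ such that $d(x_n,x_m) < 2^{-k}$ for every $n,m \geq n_k$; this is exactly what Cauchyness provides (choose $n_{k+1}$ so that it exceeds $n_k$ and simultaneously witnesses the Cauchy condition for tolerance $2^{-(k+1)}$). Along this subsequence one has
\[
\sum_{k=1}^{\infty} d(x_{n_k},x_{n_{k+1}}) \leq \sum_{k=1}^{\infty} 2^{-k} < \infty,
\]
so $(x_{n_k})$ is a $\Sigma$-Cauchy sequence.

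By the hypothesis of $\Sigma$-semicompleteness applied to $(x_{n_k})$, there exists a further subsequence $(x_{n_{k_j}})$ which converges to some $x \in X$. But $(x_{n_{k_j}})$ is also a subsequence of the original Cauchy sequence $(x_n)$, so $(x_n)$ admits a convergent subsequence, which is precisely the definition of semicompleteness. Since the Cauchy sequence was arbitrary, $(X,d)$ is semicomplete.

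The argument is essentially routine and I do not anticipate any real obstacle; the only subtlety worth noting is that one must not confuse $\Sigma$-Cauchy (a summability condition on consecutive distances) with Cauchy (a uniform smallness condition on all sufficiently late pairs), and the passage from Cauchy to $\Sigma$-Cauchy is only valid along a sufficiently thinned subsequence, which is precisely what the doubling-tolerance construction guarantees. No use of the triangle-type axiom (G) is needed, so the lemma holds for arbitrary semimetric spaces as stated.
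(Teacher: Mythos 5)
Your proof is correct. The paper does not reproduce a proof of this lemma (it is quoted directly from Suzuki's paper \cite{Su}), and your argument --- thinning a Cauchy sequence to one with $d(x_n,x_m)<2^{-k}$ for $n,m\geq n_k$, so that consecutive distances along the subsequence are summable, then applying $\Sigma$-semicompleteness and noting that a subsequence of a subsequence is a subsequence --- is the standard one, requiring no triangle-type axiom, exactly as you observe.
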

\begin{theorem}\cite[Theorem 13]{Su} \label{chara}
Let $(X,d)$ be a semimetric space. Then the following are equivalent:
\begin{itemize}
\item[(i)] $X$ is $\Sigma$-semicomplete;

\item[(ii)] every function $f \colon X\to X$ has a fixed point whenever there is a proper, sequentially lower semicontinuous function $h \colon X \to [0,\infty]$ such that 
$$\forall_{x\in X} \,\, d(x,f(x)) \leq h(x)-h(f(x)).$$
\end{itemize}

\end{theorem}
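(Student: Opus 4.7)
The plan is to prove the two implications separately; $(ii) \Rightarrow (i)$ lends itself to a direct contrapositive construction, while $(i) \Rightarrow (ii)$ is a Caristi-style iteration whose final step is the delicate one.

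For $(ii) \Rightarrow (i)$, I would argue by contraposition: suppose $(x_n)$ is a $\Sigma$-Cauchy sequence with no convergent subsequence; after passing to a subsequence we may assume the $x_n$ are pairwise distinct. Setting $A := \{x_n : n \in \N\}$, I define
\[
f(x_n) := x_{n+1}, \qquad f(y) := x_1 \text{ for } y \notin A,
\]
\[
h(x_n) := \sum_{k \ge n} d(x_k, x_{k+1}), \qquad h(y) := +\infty \text{ for } y \notin A.
\]
Then $f$ has no fixed point, $h$ is proper and nonnegative, and $d(x, f(x)) \le h(x) - h(f(x))$ holds with equality on $A$ (by telescoping) and trivially off $A$. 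The critical verification is that $h$ is sequentially lower semicontinuous: if $y_k \to y$, the hypothesis that $(x_n)$ has no convergent subsequence, combined with a pigeonhole argument on indices, forces $y_k = y$ eventually when $y \in A$ and $y_k \notin A$ eventually when $y \notin A$, in either case yielding $\liminf h(y_k) \ge h(y)$. Applying $(ii)$ to this pair $(f,h)$ then produces a fixed point of $f$, contradicting the construction.

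For $(i) \Rightarrow (ii)$, pick $x_1$ with $h(x_1) < \infty$ (which exists since $h$ is proper) and iterate $x_{n+1} := f(x_n)$. The hypothesis telescopes to $\sum_n d(x_n, x_{n+1}) \le h(x_1) < \infty$, so $(x_n)$ is $\Sigma$-Cauchy and $(i)$ yields a subsequence $x_{n_k} \to z$, with $h(z) \le \lim_n h(x_n) =: L$ by lower semicontinuity. To force $f(z) = z$, I would refine the iteration in the Brezis--Browder style: at step $n$ pick $x_{n+1} \in S(x_n) := \{y : d(x_n, y) \le h(x_n) - h(y)\}$ with $h(x_{n+1})$ within $1/(n+1)$ of $\alpha(x_n) := \inf_{y \in S(x_n)} h(y)$. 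The goal is to show $\alpha(z) \ge L$ for the cluster point $z$, so that $f(z) \in S(z)$ gives $h(f(z)) \ge L \ge h(z)$, contradicting $h(f(z)) < h(z)$ when $f(z) \ne z$.

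The main obstacle is carrying out this last step in a pure semimetric setting. In a metric space the triangle inequality yields transitivity $S(x_{n+1}) \subset S(x_n)$, which makes $\alpha(x_n)$ increasing with limit $L$, places $z$ inside every $S(x_n)$, and ultimately gives $\alpha(z) \ge L$ via $S(z) \subset S(x_n)$. Without any triangle-type condition, each of these inclusions can fail, and the propagation of approximate minima must be arranged by a substitute mechanism -- for instance, applying Zorn's lemma to the reflexive-antisymmetric relation $y \preceq x \iff d(x,y) \le h(x) - h(y)$ along chains whose $\Sigma$-Cauchy character is forced by $h$-monotonicity, or a secondary iteration from $z$ combined with a minimality argument on $h$-values. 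Bridging this gap is where the combined strength of $\Sigma$-semicompleteness, sequential lower semicontinuity, and carefully chosen iterates becomes essential.
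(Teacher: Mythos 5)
First, a point of order: the paper does not prove this statement at all --- it is imported verbatim from Suzuki as \cite[Theorem 13]{Su} and used as a black box --- so there is no internal proof to compare yours against, and your attempt must stand on its own.

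Your $(ii)\Rightarrow(i)$ direction is the right construction, but the step ``after passing to a subsequence we may assume the $x_n$ are pairwise distinct'' is not harmless here. In the absence of any triangle-type inequality, a subsequence of a $\Sigma$-Cauchy sequence need not be $\Sigma$-Cauchy: $\sum_k d(x_{n_k},x_{n_{k+1}})$ is in no way controlled by $\sum_n d(x_n,x_{n+1})$. If the extracted subsequence fails to be $\Sigma$-Cauchy, your $h$ is identically $+\infty$ on $A$ and hence not proper, and $(ii)$ cannot be invoked. The fix is to avoid extraction: since no value can recur infinitely often (a constant subsequence would converge), set $N(x):=\max\{n : x_n=x\}$ for $x\in A$, $f(x):=x_{N(x)+1}$ and $h(x):=\sum_{k\geq N(x)}d(x_k,x_{k+1})$; then $N(f(x))\geq N(x)+1$ yields the Caristi inequality, $f$ is fixed-point free, and your pigeonhole argument for sequential lower semicontinuity goes through (with the small correction that only the terms of $(y_k)$ lying in $A$ are eventually equal to $y$; the remaining terms have $h=+\infty$, which is all one needs).

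The genuine gap is $(i)\Rightarrow(ii)$, which you have not proved. You correctly diagnose why the Brezis--Browder scheme collapses: without the triangle inequality the relation $d(x,y)\leq h(x)-h(y)$ is not transitive, and without any continuity of $d$ one cannot pass $d(x_n,x_m)\leq h(x_n)-h(x_m)$ to the limit, so neither $z\in S(x_n)$ nor $S(z)\subset S(x_n)$ is available and the inequality $\alpha(z)\geq L$ on which your plan hinges is precisely what fails. But naming the obstruction and listing candidate remedies is not a proof, and neither remedy works as stated: your relation $\preceq$ is not a partial order (transitivity is exactly what is missing), so Zorn's lemma does not apply to it; and a single secondary iteration from $z$ merely produces a new cluster point with a strictly smaller $h$-value, which yields no contradiction, since an infinite strictly decreasing sequence of nonnegative reals is perfectly possible. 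Any correct argument must supply a substitute mechanism --- for instance a transfinite iteration of the ``orbit, then cluster point'' step, where $h$ strictly decreases at successor stages, countable limit stages are handled by concatenating truncated orbits with summable jumps into a new $\Sigma$-Cauchy sequence along which $h$ is monotone, and the process halts at a countable ordinal because a strictly decreasing transfinite family of reals is countable. Since this direction is the substantive half of the equivalence (it is Caristi's theorem for $\Sigma$-semicomplete semimetric spaces), the proposal is incomplete where it matters most.
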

Using Theorem \ref{CK th}, we can prove the following analogue to the previous result.
\begin{theorem}
Let $(X,d)$ be a b-metric space with a constant $K \geq 1$, where $d$ is $1$-continuous. Then the following are equivalent:
\begin{itemize}
\item[(i)] $X$ is semicomplete;

\item[(ii)] every function $f \colon X\to X$ has a fixed point whenever there is a proper, sequentially lower semicontinuous function $h \colon X \to [0,\infty]$ such that 
$$\forall_{x\in X} \,\, d(x,f(x)) \leq h(x)-h(f(x));$$

\item[(iii)] $X$ is $\Sigma$-semicomplete.
\end{itemize}
\end{theorem}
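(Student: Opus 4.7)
The plan is to close the cycle $(i)\Rightarrow(ii)\Rightarrow(iii)\Rightarrow(i)$ by combining three already-established facts: Theorem \ref{chara}, Lemma \ref{sigma}, and Proposition \ref{Propklu}(2). Two of the three implications come essentially for free. Since condition (ii) of the present theorem is literally condition (ii) of Theorem \ref{chara}, the equivalence $(ii)\Leftrightarrow(iii)$ is Suzuki's characterization applied verbatim, and the implication $(iii)\Rightarrow(i)$ is exactly Lemma \ref{sigma}.

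The substantive content is $(i)\Rightarrow(ii)$, and the plan is to apply Proposition \ref{Propklu}(2) after reducing to a setting where the Caristi-type witness is real-valued. Given $f$ and $h$ as in (ii), I would use properness of $h$ to pick some $x_0 \in X$ with $h(x_0) < \infty$ and set $L := \{x \in X \colon h(x) \leq h(x_0)\}$. The Caristi-type inequality $d(x, f(x)) \leq h(x) - h(f(x))$ forces $h(f(x)) \leq h(x)$, whence $L$ is $f$-invariant; the sequential lower semicontinuity of $h$ makes $L$ sequentially closed in $X$. Next I would verify that $(L, d\res L)$ inherits the structure needed for Proposition \ref{Propklu}(2): the b-metric inequality with the same $K$ and the $1$-continuity of $d$ both restrict directly, while semicompleteness transfers because a Cauchy sequence in $L$ is Cauchy in $X$, admits a convergent subsequence in $X$ by (i), and has its limit in $L$ by sequential closedness.

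On $L$, the function $\phi := (h\res L)/K$ is real-valued, bounded in $[0, h(x_0)/K]$, and sequentially lower semicontinuous, hence a Caristi-Kirk function on $L$. Proposition \ref{Propklu}(2) applied to $(L, d\res L, \phi)$ at the point $x_0$ then yields some $a \in B^\phi_{x_0} \subseteq L$ with $B^\phi_a = \{a\}$ (inside $L$). The Caristi inequality gives $d(a, f(a)) \leq h(a) - h(f(a)) = K\phi(a) - K\phi(f(a))$, while $f(a) \in L$ by invariance, so $f(a) \in B^\phi_a = \{a\}$ and $f(a) = a$, completing the proof of (ii).

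The main obstacle I anticipate is the handling of the possible $+\infty$ values of $h$; the sublevel-set construction reduces it to the routine task of showing that a sequentially closed, $f$-invariant subspace of a semicomplete $1$-continuous b-metric space inherits those properties. Apart from this bookkeeping, the fixed point is extracted from Proposition \ref{Propklu}(2) with no further work.
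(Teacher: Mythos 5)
Your proposal is correct, and for the two easy implications it coincides with the paper: $(ii)\Rightarrow(iii)$ is Theorem \ref{chara} and $(iii)\Rightarrow(i)$ is Lemma \ref{sigma}. For the substantive implication $(i)\Rightarrow(ii)$ you take a genuinely different route. The paper defines $\Phi(x,y)=h(y)-h(x)$ on all of $X$, checks that it is a $K$-Oettli-Th\'era function, and invokes Theorem \ref{CK th}; you instead restrict to the sublevel set $L=\{x\colon h(x)\le h(x_0)\}$, where $h$ is finite, and apply Proposition \ref{Propklu}(2) to the Caristi-Kirk function $\phi=h/K$ on the subspace $(L,d\!\res\! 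L)$. The paper's route is shorter on the page but is formally delicate when $h$ attains the value $+\infty$: there $h(y)-h(x)$ can be $-\infty$ or the undefined $\infty-\infty$, so $\Phi$ does not literally land in $\ov{\R}$ and the telescoping verification of $\Phi(x,z)\le K(\Phi(x,y)+\Phi(y,z))$ needs a caveat. Your sublevel-set reduction sidesteps this entirely, at the modest cost of checking that $L$ inherits the $b$-metric inequality, $1$-continuity, and semicompleteness (the last via sequential closedness of $L$, which follows from lower semicontinuity of $h$); all of these checks are as routine as you claim, and the $f$-invariance of $L$ is forced by the Caristi inequality exactly as you argue. Since Theorem \ref{CK th} is itself deduced from Proposition \ref{Propklu}, both arguments ultimately rest on the same ball-space machinery, but yours is the more careful of the two where the infinite values of $h$ are concerned.
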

\begin{proof}
(i) $\Rightarrow$ (ii). Take $f \colon X\to X$. Assume that there exists a proper, sequentially lower semicontinuous function $h \colon X \to [0,\infty]$ such that 
$$\forall_{x\in X} \,\, d(x,f(x)) \leq h(x)-h(f(x)).$$ Define a mapping $\Phi \colon X\times X \to \ov{\R}$ by the formula $\Phi(x,y) := h(y)-h(x)$. We will show that $\Phi$ is a $K$-Oettli-Th\'era function. The proofs of conditions (i) and (ii) from the definition of $K$-Oettli-Th\'era function are immediate. Pick any $x,y,z \in X$. We have
$$\Phi(x,z) = h(z)-h(x) = h(z)-h(y)+h(y) - h(x) = \Phi(y,z) + \Phi(x,y)\leq K(\Phi(y,z) + \Phi(x,y)).$$ 
Choose $x_0 \in \{x \in X\colon h(x) \in \R\}$. Such $x_0$ exists because $h$ is proper. Then for any $x \in X$, we have $\Phi(x_0,x) = h(x)-h(x_0) \geq -h(x_0).$ Hence $\Phi$ is a $K$-Oettli-Th\'era function. Since we have 
$$\forall_{x\in X} \,\, d(x,f(x)) \leq h(x)-h(f(x))= -\Phi(x,f(x)),$$
by Theorem \ref{CK th}, $f$ has a fixed point.

(ii) $\Rightarrow$ (iii)
Follows from Theorem \ref{chara}. 

(iii) $\Rightarrow$ (i)
Follows from Lemma \ref{sigma}.
\end{proof}

{
Now, we return from this slight detour to give a quick series of generalizations of the {Caristi}-Kirk theorem in various settings. {The} reasoning in the proofs are similar to the ones presented in \cite{BCLS}. However, {the} obtained results are more general.}

\begin{theorem}\emph(cf. \cite[Theorem 22]{BCLS}\emph)
Let $(X,d)$ be a semicomplete b-metric space with a constant $K \geq 1$, where $d$ is $1$-continuous and $F \colon X \to \Pe(X)$. Let $\Phi \colon X \times X \to \ov{\R}$ be a $K$-Oettli-Th\'era function. If
\begin{equation} \label{CKwar2}
\forall_{x\in X} \,\exists_{y \in F(x)}\,\, d(x,y) \leq - \Phi(x,y),
\end{equation}
then there is $a \in X$ such that $a \in F(a).$
\end{theorem}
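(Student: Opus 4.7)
The plan is to mimic the proof of Theorem \ref{CK th}, replacing the single-valued evaluation $f(x)$ with a choice of an element $y\in F(x)$ guaranteed by hypothesis (\ref{CKwar2}). The key observation is that the condition (\ref{CKwar2}) says precisely that for every $x\in X$ the $K$-Oettli-Th\'era ball $B^{\Phi}_x$ meets $F(x)$, i.e., there is $y\in F(x)\cap B^{\Phi}_x$. Once we have this reformulation, we can forget about $F$ for the rest of the argument and work purely at the level of ball spaces.

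First, since $\Phi$ is a $K$-Oettli-Th\'era function, by condition (iv) of the definition we can fix an Oettli-Th\'era element $x_0\in X$. Second, apply Proposition \ref{Propklu}(1) to obtain an element $a\in B^{\Phi}_{x_0}$ such that $B^{\Phi}_a = \{a\}$; this is where semicompleteness, $1$-continuity, and the $K$-Oettli-Th\'era structure of $\Phi$ get used, via the spherical completeness of $(B^{\Phi}_{x_0},\B^{\Phi}_{x_0})$ (Proposition \ref{propzup}(2)) combined with strong contractivity (Corollary \ref{wstrcon}) and Theorem \ref{twosc}. Third, invoke (\ref{CKwar2}) at $x=a$ to produce some $y\in F(a)$ satisfying $d(a,y)\leq -\Phi(a,y)$, which is exactly the statement $y\in B^{\Phi}_a$. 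Since $B^{\Phi}_a = \{a\}$, we must have $y=a$, and therefore $a\in F(a)$.

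There is essentially no genuine obstacle here: all the hard work has been done in the machinery developed earlier, and the multivalued extension only requires us to notice that the inequality in (\ref{CKwar2}) is the defining inequality of the $K$-Oettli-Th\'era ball $B^{\Phi}_x$. The only point to be a little careful about is that we need to select $y\in F(a)$ after the fixed point $a$ has been produced, rather than trying to build any kind of selector $f\colon X\to X$ in advance (although, in fact, one could use the axiom of choice to pick a selector $f$ with $f(x)\in F(x)\cap B^{\Phi}_x$ for every $x$ and apply Theorem \ref{CK th} directly to obtain the conclusion without any extra work). Either packaging gives a proof of at most a few lines.
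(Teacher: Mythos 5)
Your proposal is correct and follows essentially the same route as the paper: both obtain $a$ with $B^{\Phi}_a=\{a\}$ from Proposition \ref{Propklu} and then apply \eqref{CKwar2} at $x=a$ to force the chosen $y\in F(a)$ to equal $a$. The selector remark is a valid alternative packaging but adds nothing beyond the paper's argument.
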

\begin{proof}
From (\ref{CKwar2}) we infer that for any $x\in X$ there is $y \in F(x)$ such that $y \in B_x$. By Proposition~\ref{Propklu}, there is $a \in X$ such that $B_a = \{a\}$. Hence there is $y \in F(a)$ such that $y \in B_a$, so $a \in F(a)$.
\end{proof}
\begin{theorem}\emph(cf. \cite[Theorem 23]{BCLS}\emph)
Let $(X,d)$ be a semicomplete b-metric space with a constant $K \geq 1$, where $d$ is $1$-continuous. Let $\Phi \colon X \times X \to \ov{\R}$ be a $K$-Oettli-Th\'era function. There exists $a \in X$ such that
\begin{equation} \label{CKwar3}
\forall_{x\in X\setminus\{a\}} \,\, d(a,x) > - \Phi(a,x).
\end{equation}
\end{theorem}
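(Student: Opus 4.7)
The plan is to obtain the desired $a$ as an immediate consequence of Proposition \ref{Propklu}(1), since the condition $B_a^\Phi = \{a\}$ is exactly a reformulation of the claim \eqref{CKwar3}. No serious obstacle is expected; the entire argument is essentially a translation between the two formulations.

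More precisely, since $\Phi$ is a $K$-Oettli-Th\'era function, condition (iv) from its definition guarantees the existence of an Oettli-Th\'era element $x_0 \in X$ for $\Phi$. I would then invoke Proposition \ref{Propklu}(1) to obtain an element $a \in B^{\Phi}_{x_0}$ with the property that
\[
B^{\Phi}_a = \{a\}.
\]
Recall that by definition,
\[
B^{\Phi}_a = \{y \in X \colon d(a,y) \leq -\Phi(a,y)\}.
\]

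It now suffices to unpack what $B^{\Phi}_a = \{a\}$ means. Fix any $x \in X\setminus\{a\}$. Then $x \notin B^{\Phi}_a$, which by the very definition of $B^{\Phi}_a$ is equivalent to the failure of the inequality $d(a,x) \leq -\Phi(a,x)$, i.e.,
\[
d(a,x) > -\Phi(a,x).
\]
(In the case $\Phi(a,x) = +\infty$ the right-hand side is $-\infty$, so the strict inequality holds trivially since $d(a,x) \geq 0$.) As $x$ was arbitrary in $X\setminus\{a\}$, this gives exactly \eqref{CKwar3} and finishes the proof.
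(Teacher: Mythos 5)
Your proposal is correct and follows exactly the same route as the paper: both obtain $a$ from Proposition \ref{Propklu}(1) and note that $B^{\Phi}_a=\{a\}$ is a literal restatement of \eqref{CKwar3}. Your version is slightly more careful (noting that condition (iv) supplies an Oettli-Th\'era element and handling the $\Phi(a,x)=+\infty$ case), but there is no substantive difference.
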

\begin{proof}
Condition (\ref{CKwar3}) means that for any $x\in X\setminus\{a\}$, $x\notin B_a$, that is $B_a = \{a\}.$ The existence of such $a$ follows from Proposition \ref{Propklu}.
\end{proof}
\begin{theorem}\emph(cf. \cite[Theorem 24]{BCLS}\emph)
Let $(X,d)$ be a semicomplete b-metric space with a constant $K \geq 1$, where $d$ is $1$-continuous. Let $\Phi \colon X \times X \to \ov{\R}$ be a $K$-Oettli-Th\'era function. For any $\gamma > 0$ and any Oettli-Th\'era element $x_0$ for $\Phi$ there exists $a \in X$ such that
\begin{equation} \label{CKwar4}
\forall_{x\in X\setminus\{a\}} \,\, \gamma d(a,x) > - \Phi(a,x)
\end{equation}
and
\begin{equation} \label{CKwar4'}
\gamma d(x_0,a) \leq - \Phi(x_0,a).
\end{equation}
\end{theorem}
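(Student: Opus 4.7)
The plan is to reduce this statement to Proposition \ref{Propklu}(1) by a simple rescaling of the $K$-Oettli-Th\'era function. Define $\tilde\Phi \colon X \times X \to \ov{\R}$ by
\[
\tilde\Phi(x,y) := \tfrac{1}{\gamma}\Phi(x,y).
\]
Observe that for this rescaled function,
\[
B^{\tilde\Phi}_x = \{y \in X \colon d(x,y) \leq -\tilde\Phi(x,y)\} = \{y \in X \colon \gamma d(x,y) \leq -\Phi(x,y)\},
\]
so condition \eqref{CKwar4} is exactly the statement $B^{\tilde\Phi}_a = \{a\}$, and condition \eqref{CKwar4'} is exactly the statement $a \in B^{\tilde\Phi}_{x_0}$.

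First I would verify that $\tilde\Phi$ is again a $K$-Oettli-Th\'era function for which $x_0$ remains an Oettli-Th\'era element. Conditions (i) and (ii) of the definition are immediate, since $\gamma > 0$ preserves sequential lower semicontinuity and $\tilde\Phi(x,x)=0$. For (iii), dividing the inequality $\Phi(x,z) \leq K(\Phi(x,y)+\Phi(y,z))$ by $\gamma$ gives $\tilde\Phi(x,z) \leq K(\tilde\Phi(x,y)+\tilde\Phi(y,z))$. Finally, $\inf_{x\in X} \tilde\Phi(x_0,x) = \tfrac{1}{\gamma}\inf_{x\in X} \Phi(x_0,x) > -\infty$, so (iv) holds and $x_0$ is an Oettli-Th\'era element for $\tilde\Phi$.

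Now Proposition \ref{Propklu}(1) applied to $\tilde\Phi$ and $x_0$ yields an element $a \in B^{\tilde\Phi}_{x_0}$ such that $B^{\tilde\Phi}_a = \{a\}$. Unwrapping the definitions as noted above gives \eqref{CKwar4} and \eqref{CKwar4'} at once, concluding the proof. There is essentially no obstacle here; the only subtlety is noticing that the threshold constant $\gamma$ can be absorbed into $\Phi$ by rescaling, after which the conclusion follows directly from the structural fixed-point result already established.
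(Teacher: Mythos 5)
Your proposal is correct and follows essentially the same route as the paper: both rescale $\Phi$ to $\frac{1}{\gamma}\Phi$, check that this is again a $K$-Oettli-Th\'era function with $x_0$ as an Oettli-Th\'era element, and then apply Proposition \ref{Propklu}(1) and unwrap the definitions of the balls. The only difference is that you spell out the verification that the paper dismisses as ``easy to see,'' which is a harmless addition.
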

\begin{proof}
It is easy to see that a function $\Psi \colon X \times X \to \ov{\R}$ given by the formula $\Psi(x,y) = \frac{1}{\gamma}\Phi(x,y)$ is also a $K$-Oettli-Th\'era function with the same Oettli-Th\'era elements as $\Phi$. Hence, by Proposition \ref{Propklu}, there is $a \in B_{x_0}^{\Psi}$ such that $B_a^{\Psi} = \{a\}$. We have
$$d(x_0,a) \leq -\Psi(x_0,a) = - \frac{1}{\gamma}\Phi(x_0,a),$$
because $a \in B_{x_0}^{\Psi}$. This gives us (\ref{CKwar4'}). Moreover, since $B_a^{\Psi} = \{a\}$,
$$\forall_{x\in X\setminus\{a\}} \,\, d(a,x) > - \Psi(a,x) = - \frac{1}{\gamma}\Phi(a,x),$$
which is equivalent to (\ref{CKwar4}).
\end{proof}

\begin{theorem}\emph(cf. \cite[Theorem 25]{BCLS}\emph)
Let $(X,d)$ be a semicomplete b-metric space with a constant $K \geq 1$, where $d$ is $1$-continuous. Let $\Phi \colon X \times X \to \ov{\R}$ be a $K$-Oettli-Th\'era function. Fix $\ve \geq 0$ and $x_0 \in X$ such that $-\ve \leq \inf\limits_{x\in X} \Phi(x_0,x)$. Then for every $\gamma > 0$ and $\delta \geq \frac{\ve}{\gamma}$ there exists $a \in X$ such that $d(a,x_0) \leq \delta$ and $a$ is the strict minimum of the function $\phi_{\gamma} \colon X \to \R$ given by the formula
$$\phi_{\gamma}(x) = \Phi(a,x) + \gamma d(x,a).$$
\end{theorem}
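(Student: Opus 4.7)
The plan is to mimic the approach of the preceding theorem, namely to rescale $\Phi$ by $1/\gamma$ and then apply Proposition \ref{Propklu}. Define $\Psi \colon X \times X \to \ov{\R}$ by $\Psi(x,y) := \frac{1}{\gamma}\Phi(x,y)$. A direct verification (exactly as in the proof of the previous theorem) shows that $\Psi$ is again a $K$-Oettli-Th\'era function, and that every Oettli-Th\'era element for $\Phi$ is one for $\Psi$; in particular, our fixed $x_0$ is an Oettli-Th\'era element for $\Psi$, since $\inf_{x\in X}\Psi(x_0,x) = \frac{1}{\gamma}\inf_{x\in X}\Phi(x_0,x) \geq -\frac{\ve}{\gamma} > -\infty$.

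Next, I would apply Proposition \ref{Propklu} (1) to $\Psi$ and $x_0$: this yields a point $a \in B^{\Psi}_{x_0}$ with $B^{\Psi}_a = \{a\}$. The condition $a \in B^{\Psi}_{x_0}$ means
\[
d(x_0,a) \leq -\Psi(x_0,a) = -\tfrac{1}{\gamma}\Phi(x_0,a) \leq \tfrac{\ve}{\gamma} \leq \delta,
\]
where we used $-\Phi(x_0,a) \leq \ve$, which comes from the assumption $-\ve \leq \inf_{x\in X}\Phi(x_0,x) \leq \Phi(x_0,a)$. This settles the first assertion $d(x_0,a)\leq \delta$.

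For the strict minimum, let $x \in X \setminus \{a\}$. Since $B^{\Psi}_a = \{a\}$, we have $x \notin B^{\Psi}_a$, i.e., $d(a,x) > -\Psi(a,x) = -\frac{1}{\gamma}\Phi(a,x)$, which, after multiplying by $\gamma > 0$, rearranges to
\[
\Phi(a,x) + \gamma d(a,x) > 0 = \Phi(a,a) + \gamma d(a,a) = \phi_\gamma(a).
\]
Recalling $d(a,x)=d(x,a)$, this is precisely $\phi_\gamma(x) > \phi_\gamma(a)$, so $a$ is the strict minimum of $\phi_\gamma$.

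There is essentially no obstacle beyond checking these two simple rewrites. The only subtle point is ensuring the rescaling $\Psi = \frac{1}{\gamma}\Phi$ preserves the Oettli-Th\'era conditions even when $\Phi$ can take the value $+\infty$; here one uses the convention $\frac{1}{\gamma}\cdot(+\infty)=+\infty$, and the inequality (iii) is preserved because $\gamma > 0$. All remaining manipulations are routine, so the whole proof reduces to the observation that $\Psi$ is an admissible Oettli-Th\'era function together with one application of Proposition \ref{Propklu}.
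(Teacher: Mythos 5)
Your proposal is correct and follows essentially the same route as the paper: rescale $\Phi$ to $\Psi=\frac{1}{\gamma}\Phi$, apply Proposition \ref{Propklu} to get $a\in B^{\Psi}_{x_0}$ with $B^{\Psi}_a=\{a\}$, and read off the two conclusions from $a\in B^{\Psi}_{x_0}$ and $B^{\Psi}_a=\{a\}$ exactly as the authors do. No gaps.
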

\begin{proof}
Pick $\gamma > 0$ and $\delta \geq \frac{\ve}{\gamma}$.
A function $\Psi \colon X \times X \to \ov{\R}$ given by the formula $\Psi(x,y) = \frac{1}{\gamma}\Phi(x,y)$ is a $K$-Oettli-Th\'era function with the same Oettli-Th\'era elements as $\Phi$. Hence, by Proposition \ref{Propklu}, there is $a \in B_{x_0}^{\Psi}$ such that $B_a^{\Psi} = \{a\}$. We have
$$d(x_0,a) \leq -\Psi(x_0,a) = - \frac{1}{\gamma}\Phi(x_0,a),$$
because $a \in B_{x_0}^{\Psi}$. 
Thus,
$$\gamma d(x_0,a) \leq -\Phi(x_0,a) \leq - \inf\limits_{x\in X} \Phi(x_0,x) \leq \ve \leq \gamma \delta.$$
So, $d(a,x_0) \leq \delta$.
Moreover, since $B_a^{\Psi} = \{a\}$,
$$\forall_{x\in X\setminus\{a\}} \,\, d(a,x) > - \Psi(a,x) = - \frac{1}{\gamma}\Phi(a,x).$$
Therefore, for any $x\in X\setminus\{a\}$, we have
$$\phi_{\gamma}(x) = \Phi(a,x) + \gamma d(a,x) > 0 = \phi_{\gamma}(a),$$
which finishes the proof.
\end{proof}
For the next result we need another definition. Consider a semimetric space $(X,d)$, $\gamma \in (0, \infty)$ and $a,b \in X$. The set 
$$P_{\gamma}(a,b):=\{y \in X \colon \gamma d(y,a) + d(y,b) \leq d(a,b)\}$$
is called a petal associated with $\gamma$ and $a,b$.
\begin{theorem}\emph(cf. \cite[Theorem 27]{BCLS}\emph)
Let $M$ be a semicomplete subset of a b-metric space $(X,d)$ with a constant $K \geq 1$, where $d$ is $1$-continuous. Fix $x_0 \in M$ and $b \in X\setminus M$. Then for every $\gamma > 0$ there exists $a \in P_{\gamma}(x_0,b) \cap M$ such that 
$$P_{\gamma}(a,b) \cap M = \{a\}.$$
\end{theorem}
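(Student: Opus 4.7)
The plan is to reduce the statement to Proposition \ref{Propklu}(2) applied to $M$ (endowed with the restricted b-metric) together with a carefully chosen Caristi-Kirk function whose associated $K$-Caristi-Kirk balls coincide with the petals intersected with $M$.

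First I would verify that the ambient structure restricts cleanly: $(M, d\!\upharpoonright_{M\times M})$ is a b-metric space with the same constant $K$, it is semicomplete by hypothesis, and $1$-continuity survives (if $(x_n)\subset M$ converges to $x\in M$, then $d(x_n,y)\to d(x,y)$ for every $y\in M$, directly from the ambient $1$-continuity). The crucial choice is the function
$$\phi\colon M\to \R,\qquad \phi(x):=\frac{d(x,b)}{K\gamma}.$$
It is nonnegative, hence bounded from below. Since $b\notin M$ is fixed and $d$ is $1$-continuous, the map $x\mapsto d(x,b)$ is sequentially continuous on $M$, so $\phi$ is sequentially lower semicontinuous. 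Thus $\phi$ is a Caristi-Kirk function on the b-metric space $(M,d\!\upharpoonright_{M\times M})$.

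Next I would compute the $K$-Caristi-Kirk balls and recognize them as truncated petals. For $x\in M$ we have
$$B^{\phi}_{x}=\bigl\{y\in M:\, d(x,y)\leq K(\phi(x)-\phi(y))\bigr\}=\bigl\{y\in M:\, \gamma d(x,y)+d(y,b)\leq d(x,b)\bigr\}=P_{\gamma}(x,b)\cap M,$$
where the factor $K$ in the denominator of $\phi$ is exactly what cancels the coefficient $K$ present in the definition of the $K$-Caristi-Kirk ball. Applying Proposition \ref{Propklu}(2) in $(M,d\!\upharpoonright_{M\times M})$ with the base point $x_0\in M$ yields some $a\in B^{\phi}_{x_0}$ with $B^{\phi}_{a}=\{a\}$. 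Translating through the identification above gives $a\in P_{\gamma}(x_0,b)\cap M$ and $P_{\gamma}(a,b)\cap M=\{a\}$, which is the desired conclusion.

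I do not expect a genuine obstacle: once Proposition \ref{Propklu}(2) is available, the only thing to get right is the normalization of $\phi$ (specifically, the factor $K\gamma$ in the denominator), which arranges that the $K$ baked into the definition of the $K$-Caristi-Kirk ball produces precisely the coefficient $\gamma$ appearing in the definition of the petal. No direct manipulation of the $b$-metric inequality is required, since all triangle-like bookkeeping is already handled inside Proposition \ref{Propklu}.
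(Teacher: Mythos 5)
Your proposal is correct and follows essentially the same route as the paper: the paper also takes $\phi(x)=\frac{1}{K\gamma}d(x,b)$ on $M$, identifies $B_x^{\phi}=P_{\gamma}(x,b)\cap M$, and applies Proposition \ref{Propklu}(2). Your extra remarks verifying that the restriction to $M$ preserves the b-metric constant, semicompleteness, $1$-continuity, and the Caristi-Kirk properties of $\phi$ are details the paper leaves implicit, and they check out.
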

\begin{proof}
Fix $\gamma > 0$.
A function $\phi \colon M \to \R$ given by the formula $\phi(x) = \frac{1}{K\gamma}d(x,b)$ is a Caristi-Kirk function on $M$. For any $x \in M$ we have 
$$P_{\gamma}(x,b) \cap M = \{y \in M \colon \gamma d(y,x) + d(y,b) \leq d(x,b)\} = \{y \in M \colon d(y,x) \leq \frac{1}{\gamma}d(x,b) -  \frac{1}{\gamma} d(y,b)\}$$$$= \{y \in M \colon d(y,x) \leq K\phi(x) -  K\phi(y)\}= B_x^{\phi},$$
where $B_x^{\phi}$ are $K$-Caristi-Kirk balls in $M$. 
Using Proposition \ref{Propklu} (2), we obtain that there exists $a \in B_{x_0}^{\phi}$ such that $B_a^{\phi} = \{a\}$. Thus,
$$\{a\} = B_a^{\phi} = P_{\gamma}(a,b) \cap M.$$
\end{proof}
\begin{theorem}\emph(cf. \cite[Theorem 28]{BCLS}\emph)
Let $(X,d)$ be a semicomplete b-metric space with a constant $K \geq 1$, where $d$ is $1$-continuous. Let $\Phi \colon X \times X \to \ov{\R}$ be a $K$-Oettli-Th\'era function and $x_0 \in X$ be an Oettli-Th\'era element for $\Phi$. If for any $y \in B_{x_0}^{\Phi}$ {satisfying} $\inf\limits_{x\in X} \Phi(y,x) < 0$ there exists $z \in X$, $z \neq y$ such that $d(y,z) \leq - \Phi(y,z),$ then there is $a \in B_{x_0}^{\Phi}$ {for which} $\inf\limits_{x\in X} \Phi(a,x) = 0$.
\end{theorem}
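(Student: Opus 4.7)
The plan is to invoke Proposition \ref{Propklu}(1) directly and then use the hypothesis to rule out the possibility that $\inf_{x\in X}\Phi(a,x)$ is strictly negative. Since $(X,d)$ is a semicomplete $1$-continuous b-metric space and $\Phi$ is a $K$-Oettli-Th\'era function with Oettli-Th\'era element $x_0$, Proposition \ref{Propklu}(1) furnishes an element $a \in B_{x_0}^{\Phi}$ with $B_a^{\Phi} = \{a\}$. This $a$ will be the desired point.

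First I would note that $\Phi(a,a)=0$ by property (ii) of a $K$-Oettli-Th\'era function, so trivially $\inf_{x\in X}\Phi(a,x) \leq 0$. The remaining task is to show that this infimum cannot be strictly negative. Suppose, towards contradiction, that $\inf_{x\in X}\Phi(a,x) < 0$. Since $a \in B_{x_0}^{\Phi}$, the hypothesis of the theorem applies to $y := a$, yielding some $z \in X$ with $z \neq a$ and $d(a,z) \leq -\Phi(a,z)$. By the very definition of the $K$-Oettli-Th\'era ball this says precisely that $z \in B_a^{\Phi}$, which contradicts $B_a^{\Phi} = \{a\}$.

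Combining the two inequalities $\inf_{x\in X}\Phi(a,x) \leq 0$ and $\inf_{x\in X}\Phi(a,x) \geq 0$ gives the equality $\inf_{x\in X}\Phi(a,x) = 0$, completing the argument. There is essentially no obstacle here: the hard analytical work has already been absorbed into Proposition \ref{Propklu}, which in turn rests on the spherical completeness of $(B_{x_0}^{\Phi},\B_{x_0}^{\Phi})$ (Proposition \ref{propzup}) together with the strong contractiveness recorded in Corollary \ref{wstrcon} and the general fixed-point-type result Theorem \ref{twosc}. The only thing one needs to verify carefully is that the hypothesis is indeed applicable at $y=a$, which requires just the two observations that $a$ itself lies in $B_{x_0}^{\Phi}$ (given by the proposition) and that the contradiction assumption supplies $\inf_{x\in X}\Phi(a,x) < 0$.
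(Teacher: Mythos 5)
Your proposal is correct and follows exactly the same route as the paper's own proof: apply Proposition \ref{Propklu} to obtain $a \in B_{x_0}^{\Phi}$ with $B_a^{\Phi} = \{a\}$, note $\Phi(a,a)=0$ gives $\inf_{x\in X}\Phi(a,x)\leq 0$, and derive a contradiction from the hypothesis if the infimum were strictly negative, since the resulting $z\neq a$ would lie in $B_a^{\Phi}$. No gaps.
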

\begin{proof}
By Proposition \ref{Propklu}, there is $a \in B_{x_0}^{\Phi}$ such that $B_a^{\Phi} = \{a\}$. We will show that $\inf\limits_{x\in X} \Phi(a,x) = 0$. Since $\Phi(a,a) = 0$, we have $\inf\limits_{x\in X} \Phi(a,x) \leq 0$. Assume on the contrary that $\inf\limits_{x\in X} \Phi(a,x) < 0$. Then, by the assumption, there exists $z \in X$, $z \neq a$ such that $d(a,z) \leq - \Phi(a,z).$ But it means that $z \in B_a^{\Phi} = \{a\}$, a contradiction. Hence $\inf\limits_{x\in X} \Phi(a,x) = 0$.
\end{proof}
\begin{theorem}\emph(cf. \cite[Theorem 29]{BCLS}\emph)
Let $(X,d)$ be a semicomplete b-metric space with a constant $K \geq 1$, where $d$ is $1$-continuous. Let $\Phi \colon X \times X \to \ov{\R}$ be a $K$-Oettli-Th\'era function and $x_0 \in X$ be an Oettli-Th\'era element for $\Phi$. Let $A \subset X$ be such that 
$$\forall_{x\in B_{x_0}^{\Phi}\setminus A} \, \, \exists_{y \in X\setminus\{x\}}\,\, d(x,y) \leq -\Phi(x,y).$$
Then $B_{x_0}^{\Phi} \cap A \neq \emptyset$.
\end{theorem}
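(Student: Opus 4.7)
The plan is to recognize this theorem as a direct application of Proposition \ref{Propklu} (1), interpreting the hypothesis in the natural way suggested by the reference \cite[Theorem 29]{BCLS}: the descent condition should be quantified over $x \in B_{x_0}^{\Phi}\setminus A$, so that $A$ plays the role of the set of ``terminal'' points from which one cannot strictly descend via $\Phi$. Under that reading, the conclusion becomes meaningful.

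First, I would invoke Proposition \ref{Propklu} (1) with the given $K$-Oettli-Th\'era function $\Phi$ and the Oettli-Th\'era element $x_0$ to obtain a point $a \in B_{x_0}^{\Phi}$ such that $B_a^{\Phi} = \{a\}$. This step uses semicompleteness of $(X,d)$ and $1$-continuity of $d$ to guarantee spherical completeness of the strongly contractive ball space $(B_{x_0}^{\Phi}, \B_{x_0}^{\Phi})$, and then applies Theorem \ref{twosc}.

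Next, I would argue by contradiction that $a \in A$. If instead $a \in B_{x_0}^{\Phi}\setminus A$, the hypothesis furnishes some $y \in X\setminus\{a\}$ with $d(a,y) \leq -\Phi(a,y)$. By the very definition of the $K$-Oettli-Th\'era ball, this means $y \in B_a^{\Phi}$. But then $B_a^{\Phi}$ contains a point distinct from $a$, which contradicts $B_a^{\Phi} = \{a\}$. Therefore $a \in A$, and since $a \in B_{x_0}^{\Phi}$ as well, we conclude $B_{x_0}^{\Phi}\cap A \neq \emptyset$.

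There is essentially no technical obstacle here, as all the heavy machinery has already been set up: the existence of a singleton ball $B_a^{\Phi}$ via the spherical-completeness-plus-strong-contractivity package, and the interpretation of the descent inequality as membership in $B_a^{\Phi}$. The only real subtlety is the reading of the hypothesis; as stated, $A$ does not appear in the quantifier, so I would either flag this as a typographical omission or simply proceed with the natural interpretation $x \in B_{x_0}^{\Phi}\setminus A$, under which the proof is a one-line consequence of Proposition \ref{Propklu} together with the definition of $B_a^{\Phi}$.
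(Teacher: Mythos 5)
Your proof is correct and follows exactly the same route as the paper's: apply Proposition \ref{Propklu} to obtain $a \in B_{x_0}^{\Phi}$ with $B_a^{\Phi} = \{a\}$, then argue by contradiction that $a \in A$ using the descent hypothesis and the definition of the Oettli--Th\'era ball. You are also right to flag the quantifier: as printed the hypothesis never mentions $A$, and the paper's own proof implicitly reads it as $\forall_{x\in B_{x_0}^{\Phi}\setminus A}$, precisely the correction you propose.
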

\begin{proof}
By Proposition \ref{Propklu}, there is $a \in B_{x_0}^{\Phi}$ such that $B_a^{\Phi} = \{a\}$. We will show that $a\in A$. On the contrary assume that $a \notin A$. Then, by the assumption, there exists $y \in X$, $y \neq a$ such that $d(a,y) \leq -\Phi(a,y)$. But it means that $y \in B_a^{\Phi} = \{a\}$, a contradiction. Hence $a \in B_{x_0}^{\Phi} \cap A$.
\end{proof}

\section{Some remarks on the notion of ball-convergence and topology of ball spaces}

{It seems natural to ask whether the notion of convergence can be introduced in the setting of ball spaces}. The instinctiveness of such question may come from the fact, that we have already some convergence-related notions. A good example would be the spherical completeness, which strongly resembles completeness from the metric setting.

Let us consider a ball space $(X,\B)$ and a sequence of its elements $(x_n)_{n\in\N}$. We will say that $(x_n)_{n\in\N}$ $\B$-converges
to some $x\in X$ if $ \bigcap \B_{(x_n)} = \{x\}$, where $\B_{(x_n)}$ is the family of all balls from $\B$ containing infinitely many terms from sequence $(x_n)$, i.e., 
\begin{equation}\label{ballconverg}
\B_{(x_n)}:= \left\{ B\in \B \, : \, \operatorname{card}\left( \{x_n \, : \, n\in\N\} \cap B\right) = \aleph_0 \right\}.
\end{equation}

This fact will be denoted by $x_n\overset{\B}{\to} x$.

{\begin{theorem} \label{top}
Let $(X,\tau)$ be a $T_0$ topological space. Consider the ball space $(X,\B)$, where $\B$ is the family of all closed sets in $(X,\tau)$. Consider a sequence $(x_n)$ of elements of $X$ and a point $x \in X$. Then $(x_n)$ is convergent to $x$ with respect to $\tau$ if and only if $x_n\overset{\B}{\to} x$.
\end{theorem}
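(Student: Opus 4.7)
My plan is to handle the two implications separately, since the reverse direction is cleaner than the forward one. For the implication $x_n \overset{\B}{\to} x \Rightarrow x_n \to x$ in $\tau$, I would argue contrapositively. If $(x_n)$ does not converge to $x$ in $\tau$, then some open neighborhood $U \ni x$ is avoided by infinitely many terms of the sequence. Setting $B := X \setminus U$, this is a closed set (hence a ball in $\B$) whose intersection with $\{x_n : n \in \N\}$ is infinite, so $B \in \B_{(x_n)}$; but $x \notin B$, which contradicts $\bigcap \B_{(x_n)} = \{x\}$.

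For the forward direction, I would break the argument into two steps. First, I would verify that $x \in \bigcap \B_{(x_n)}$. Given any $B \in \B_{(x_n)}$, extract a subsequence $(x_{n_k})$ entirely contained in $B$; since $(x_n) \to x$ in $\tau$, the subsequence also converges to $x$, and closed sets in any topological space are sequentially closed (if the limit lay in the open set $X \setminus B$, a tail of $(x_{n_k})$ would be forced there, contradicting $x_{n_k}\in B$). Hence $x \in B$. Second, I would show that no $y \neq x$ belongs to $\bigcap \B_{(x_n)}$: the $T_0$ axiom supplies an open set $W$ containing exactly one of $x, y$, and the task is then to construct a closed set $B \in \B_{(x_n)}$ omitting $y$. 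Depending on which of $x$ or $y$ sits in $W$, the natural witness is either $B := X \setminus W$ (closed, contains $x$ but not $y$) or the closure $\overline{\{x_n : x_n \in W\}}$ (closed, trapping the cofinite tail of $(x_n)$ forced into $W$ by convergence to $x$), suitably combined with the $T_0$ separation to ensure both that $B$ meets $\{x_n\}$ infinitely often and that $y\notin B$.

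The main obstacle is precisely this second step. A closed set excluding $y$ need not a priori collect infinitely many terms of $(x_n)$, and conversely a closed set rich in terms of $(x_n)$ need not omit $y$; the delicate balance between these two demands is what forces combining the open separation from the $T_0$ axiom with the topological convergence $x_n \to x$. By contrast, the first step of the forward direction -- sequential closedness and subsequence extraction -- is routine, and the contrapositive argument for the reverse direction is essentially a one-line observation about the complement of a neighborhood of $x$.
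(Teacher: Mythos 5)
Your reverse implication and the first half of your forward direction (that $x\in\bigcap\B_{(x_n)}$, via sequential closedness of closed sets) are correct and coincide with the paper's argument. The genuine gap is exactly where you located it: the uniqueness step of the forward direction. Neither of your candidate witnesses works. If $W$ is open with $y\in W$, $x\notin W$, then $B:=X\setminus W$ is closed and omits $y$, but nothing forces it to contain infinitely many terms of $(x_n)$ -- convergence to $x$ only pushes the tail into \emph{open} neighbourhoods of $x$, and $B$ is closed. If instead $W$ is open with $x\in W$, $y\notin W$, then $\overline{\{x_n\colon x_n\in W\}}$ does lie in $\B_{(x_n)}$, but its closure may perfectly well swallow $y$. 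You flagged this tension honestly, but the proof is not completed, so as it stands the proposal does not establish the theorem.

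You should also know that this gap cannot be closed under the stated hypothesis, because the theorem is false for general $T_0$ spaces. In the Sierpi\'nski space $X=\{0,1\}$ with $\tau=\{\emptyset,\{1\},X\}$ (which is $T_0$), the constant sequence $x_n=1$ converges to $1$ in $\tau$, yet the only closed set containing $1$ is $X$ itself, so $\B_{(x_n)}=\{X\}$ and $\bigcap\B_{(x_n)}=X\neq\{1\}$. The underlying fact, which both directions of the argument actually prove, is that $\bigcap\B_{(x_n)}$ equals the set of \emph{all} $\tau$-limits of $(x_n)$; hence b-convergence to $x$ means precisely that $x$ is the unique limit, and the equivalence holds exactly in spaces with unique sequential limits (e.g.\ Hausdorff spaces), not in all $T_0$ spaces. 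The paper's own proof disposes of this step by asserting that limits are unique because the space is $T_0$, which is incorrect ($T_0$, and even $T_1$, does not imply uniqueness of sequential limits), so your instinct that something delicate was being asked of you here was sound: the hypothesis, not your argument, is what is deficient.
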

\begin{proof}
"$\Leftarrow$". Assume that $x_n\overset{\B}{\to} x$. This means that $x$ belongs to every closed set containing infinitely many terms of $(x_n)$. Take $U$ as an arbitrary open neighbourhood of $x$. Suppose that $U$ does not contain almost all terms of $(x_n)$. Hence the closed set $X\setminus U$ contains infinitely many terms of $(x_n)$, so due to $\B$-convergence we have $x \in X\setminus U$, a contradiction. Therefore, $(x_n)$ is convergent to $x$ with respect to $\tau$.

"$\Rightarrow$". Assume that $(x_n)$ converges to $x$ with respect to $\tau$, that is, any open neighbourhood of $x$ contains all but finitely many terms of $(x_n)$. Let $B$ be a closed set containing infinitely many terms of $(x_n)$. Suppose that $x \notin B$. Then $x \in X\setminus B$ and this set is open. By the assumption, $X\setminus B$ contains all but finitely many terms of $(x_n)$, and so $B$ contains finitely many terms of $(x_n)$, a contradiction. Now, suppose that there is $y \in X$ such that for any closed set $B$ containing infinitely many terms of $(x_n)$, $y \in B$. Repeating the reasoning from "$\Leftarrow$", we obtain that $(x_n)$ converges to $y$ with respect to $\tau$. Since $(X,\tau)$ is $T_0$, the limits are unique, so $x=y$, which finishes the proof.
\end{proof}}

\begin{remark}
In the reasoning above, instead of picking $\B$ as the family of all closed sets, one can take a topological basis $\mathcal{U}$ of $(X,\tau)$ and define $\B$ as the complements of sets from $\mathcal{U}$.
\end{remark}

As we can take any family of {nonempty} sets in the role of $\B$, the notion of $\B$-convergence covers a large variety of other \textit{modes} of convergence which guarantee the uniqueness of the limit. 

Having defined a notion of convergence in ball spaces, one can ask the question how this relates to the standard convergence in semimetric spaces (where $x_n\to x$ if and only if $d(x_n,x)\to 0$ as $n\to\infty$). Take any semimetric space $(X,d)$. Consider the topology $\tau_\B$ generated by the convergence introduced by the ball space $(X,\B_S)$ for some $S\subset (0,+\infty)$. For these two notions of convergence to coincide, we first need to guarantee that $0$ is an accumulation point of $S$. If that is not the case, the sequence $(x_n)$ might be $\B_S$-convergent to some $x$ even if all its terms are $\varepsilon$-apart from $x$, as balls might be unable to detect such small gaps between points. 

The $1$-continuity of a semimetric guarantees that standard convergence implies $\B$-convergence. This claim is formally proved in the following theorem.

\begin{theorem}\label{g1convergenceb}
Let $(X,d)$ be any $1$-continuous (G)-semimetric space. If {a} sequence $(x_n)_{n\in\N}$ of elements of $X$ converges to $x\in X$, it also $\B_S$-converges to $x$ in the ball space $(X,\B_S)$, where $S=(0,+\infty)$.
\end{theorem}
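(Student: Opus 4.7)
To prove that $x_n \overset{\B}{\to} x$, by definition I must verify that $\bigcap \B_{(x_n)} = \{x\}$, so I will split the argument into two parts: first showing that $x \in \bigcap \B_{(x_n)}$, and then showing that no other point $y \neq x$ belongs to this intersection.

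For the first part, take an arbitrary $B = B_r(z) \in \B_{(x_n)}$. By definition, $B$ contains a subsequence $(x_{n_k})_{k\in\N}$, so $d(z, x_{n_k}) \leq r$ for every $k$. Because $x_{n_k} \to x$ and $d$ is $1$-continuous, $d(z, x_{n_k}) \to d(z, x)$, and passing to the limit preserves the weak inequality. Hence $d(z, x) \leq r$, meaning $x \in B_r(z) = B$. Since $B$ was arbitrary, $x \in \bigcap \B_{(x_n)}$.

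For the second part, fix $y \in X$ with $y \neq x$ and set $\delta := d(x, y) > 0$. I claim that $B_{\delta/2}(x)$ witnesses $y \notin \bigcap \B_{(x_n)}$. Indeed, since $x_n \to x$ we have $d(x, x_n) \to 0$, so $d(x, x_n) < \delta/2$ for all sufficiently large $n$; in particular $B_{\delta/2}(x)$ contains cofinitely many (thus infinitely many) terms of $(x_n)$, placing it in $\B_{(x_n)}$. Since $d(x, y) = \delta > \delta/2$, the point $y$ fails to belong to $B_{\delta/2}(x)$, so $y \notin \bigcap \B_{(x_n)}$.

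The only real content is the use of $1$-continuity in the first part to exchange limit with the defining inequality of a closed ball; the separation argument in the second part needs nothing beyond ordinary convergence of $d(x, x_n)$ to $0$ and does not invoke the (G) condition. Note that $S = (0, +\infty)$ is essential only to guarantee that arbitrarily small radii such as $\delta/2$ are admissible for centers of our separating balls.
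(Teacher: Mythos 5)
Your proof is correct and follows essentially the same route as the paper's: $1$-continuity passes the inequality $d(z,x_{n_k})\leq r$ to the limit to show $x$ lies in every ball of $\B_{(x_n)}$, and small balls centered at $x$ (which contain almost all terms since $x_n\to x$) separate $x$ from any other point. The only cosmetic difference is that the paper excludes all $y\neq x$ at once via $\bigcap_{s>0}B_s(x)=\{x\}$, while you pick the explicit witness $B_{d(x,y)/2}(x)$ for each $y$.
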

\begin{proof}
Assume that $(x_n)_{n\in\N}$ converges to $x$. Then for each $s>0$ there exists $n_0\in \N$ such that $x_n\in B_s(x)$ for $n\geq n_0$.
Clearly{, $\bigcap_{s\in S} B_s(x)=\{x\}$}, so it is enough to show that for any $y\in X$ and any $s\in S$ such that $B_s(y)$ contains infinitely many terms of $(x_n)$ we have $x\in B_s(y)$. Let $(x_{n_k})$ be a subsequence of $(x_n)$ contained in $B_s(y)$. Clearly $d(x_{n_k},x)\to 0$ and from {the} $1$-continuity of $d$ we have
\[
d(x,y) = \lim_{k\to\infty} d(x_{n_k},y)\leq s.
\]
Hence $x\in B_s(y)$ which finishes the proof.

\end{proof}
Straightforwardly from the proof of Theorem \ref{g1convergenceb} we obtain the following: 

\begin{corollary}
Let $S\subset (0,+\infty)$ be such that $0 \in S^d$ and take any $1$-continuous (G)-semimetric space $(X,d)$. If the sequence $(x_n)_{n\in\N}$ of elements of $X$ converges to $x\in X$, it also $\B_S$-converges to $x$~in the ball space $(X,\B_S)$.
\end{corollary}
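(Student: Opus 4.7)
The plan is to repeat the argument of Theorem \ref{g1convergenceb} almost verbatim, with the only new ingredient being a substitute for the availability of all positive radii. The hypothesis $S^d \supset \{0\}$ says precisely that $0$ is an accumulation point of $S$, so we may choose a sequence $(s_k) \subset S$ with $s_k \to 0^+$; this is the only structural feature of $S$ that the proof will exploit.

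First I would establish $\bigcap \B_{(x_n)} \subseteq \{x\}$. Since $x_n \to x$, for each fixed $k$ almost all terms of $(x_n)$ lie in $B_{s_k}(x)$, so every $B_{s_k}(x)$ belongs to $\B_{(x_n)}$. If $y \in \bigcap \B_{(x_n)}$, then in particular $d(x,y) \leq s_k$ for every $k$, forcing $d(x,y) = 0$ and hence $y = x$. Note that this is the only step where the assumption $0 \in S^d$ is really used; the rest of the argument does not care about the exact contents of $S$.

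For the reverse inclusion $x \in \bigcap \B_{(x_n)}$, the reasoning is literally the one from the proof of Theorem \ref{g1convergenceb}. Given any $B_s(y) \in \B_{(x_n)}$ with $s \in S$, extract a subsequence $(x_{n_k})$ contained in $B_s(y)$, so that $d(x_{n_k}, y) \leq s$. Since $(x_{n_k})$ also converges to $x$, $1$-continuity of $d$ yields
\[
d(x,y) = \lim_{k\to\infty} d(x_{n_k}, y) \leq s,
\]
hence $x \in B_s(y)$, and, by arbitrariness of the chosen ball, $x \in \bigcap \B_{(x_n)}$.

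There is no genuine obstacle to overcome: the corollary is essentially the observation that Theorem \ref{g1convergenceb} used the richness of $S = (0,+\infty)$ only to produce radii tending to $0$, and the condition $0 \in S^d$ delivers exactly this. The two halves above combine to give $\bigcap \B_{(x_n)} = \{x\}$, i.e., $x_n \overset{\B}{\to} x$.
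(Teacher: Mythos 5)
Your proposal is correct and follows exactly the route the paper intends: the paper derives this corollary directly from the proof of Theorem \ref{g1convergenceb}, whose only use of $S=(0,+\infty)$ is to supply radii tending to $0$ for balls centred at $x$ (giving $\bigcap_k B_{s_k}(x)=\{x\}$), while the $1$-continuity argument showing $x\in B_s(y)$ for every ball containing infinitely many terms is unchanged. Your observation that $0\in S^d$ is needed only for the first half, and your explicit split into the two inclusions, match the paper's reasoning.
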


Unfortunately, we cannot reverse this implication as we have to face the following example in a~metric setting:
{\begin{example}
Consider a collection of distinct points $x,x_1,x_2,\dots$. Put $X:=\{x,x_1,x_2,\dots\}$. Define a metric on $X$ as follows
$$d(y,z)=d(z,y) := \left\{ \begin{array}{ccc}
0\;\text{ if }\; y=z \\ 
1 \;\text{ if }\; (y=x \vee z=x) \wedge y\neq z\\
1+\frac{1}{|n-m|} \;\text{ if }\; y=x_n,z=x_m, n\neq m.%
\end{array}%
\right. $$
The function $d$ is indeed a metric, because {the} first two axioms follow from the definition of $d$ and {the} triangle inequality follows easily from the fact that $d(X \times X) \subset \{0\} \cup [1,2]$. Take $\B$ as a family of all closed balls in $(X,d)$. Obviously, $d(x_n,x) \not\to 0$. However, we will show that $x_n\overset{\B}{\to} x$. Let {$B_r(y)$} be a closed ball containing infinitely many terms of $(x_n)$. We will show that $x \in {B_r(y)}$. Consider the cases:

\begin{itemize}
\item[$1.$] $y=x$. Then obviously $x \in {B_r(y)}$. 

\item[$2.$] $y=x_n$ for some $n \in \N$. Since $d(x_n,x_m) > 1$ for any $m \neq n$, we have $r \geq 1$. Because $d(x,x_n) =1,$ we obtain $x \in {B_r(y)}$.
\end{itemize}

Now, observe that for any $m \in \N$ there exists a ball ${B_r(y)}$ containing infinitely many terms of $(x_n)$ such that $x_m \notin {B_r(y)}$. Indeed, choose $y = x_{m+1}$, $ r = \frac{3}{2}$. Then, $d(x_m,x_{m+1}) = 1+1 = 2 > \frac{3}{2},$ so $x_m \notin {B_\frac{3}{2}(x_{m+1})}.$ On the other hand, for any $n > m+2$ we have $d(x_{m+1},x_n) =1 +\frac{1}{|n-m-1|} \leq \frac{3}{2}$. Thus, $x_n \in {B_\frac{3}{2}(x_{m+1})}.$ Finally, $x_n\overset{\B}{\to} x$.
\end{example}}

As the notion of $\B$-convergence even in metric spaces turns out to be weaker than standard convergence, we pose the following question: under what conditions the implication from Theorem \ref{g1convergenceb} can be reversed?

A partial answer to this question is provided by the doubling property. A semimetric space is said to be $N$-doubling if every closed ball of radius $r>0$ can be covered by at most $N$ closed balls of radius $\frac{r}{2}$ (one can think of such condition as a metric-type finite-dimensionality).

\begin{theorem}\label{convergo}
Let $(X,d)$ be a $1$-continuous semimetric space with $N$-doubling property for some $N\in\N$ and $S={(0,\infty)}$. If {a} sequence $(x_n)_{n\in\N}$ $\B_S$-converges to some $x\in X$ in {the} ball space $(X,\B_S)$, then $(x_n)$ contains a subsequence $(x_{n_k})$ such that $d({x_{n_k}},x)\to 0$ as $n\to\infty$. {Moreover, if $(x_n)$ is bounded, then $d(x_{n},x)\to 0$ as $n\to\infty$.}
\end{theorem}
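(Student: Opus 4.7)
The plan is to use the $N$-doubling property iteratively to construct balls with radii $R/2^l$ whose centers are pinned to $x$ by b-convergence, and then extract a diagonal subsequence of $(x_n)$ sitting inside them.

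First I would note that $\B_{(x_n)}$ must be nonempty (otherwise $\bigcap \B_{(x_n)}$ collapses to the whole $X$ and b-convergence forces $X=\{x\}$, a trivial case), so I fix some $B_R(y_0)\in \B_{(x_n)}$ and pass to the subsequence of $(x_n)$ contained in $B_R(y_0)$; from here on, assume $x_n\in B_R(y_0)$ for every $n$. Starting from $B^{(0)}:=B_R(y_0)$, I define $B^{(l+1)}:=B_{R/2^{l+1}}(y_{l+1})$ recursively by covering $B^{(l)}$ with $N$ balls of radius $R/2^{l+1}$ (via $N$-doubling) and picking, by pigeonhole, a covering ball that still contains infinitely many of the $x_n$ lying in $B^{(l)}$. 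This produces a nested family of infinite index sets $I_0\supset I_1\supset \cdots$ with $x_n\in \bigcap_{l'\leq l}B^{(l')}$ for every $n\in I_l$. Since each $B^{(l)}$ belongs to $\B_{(x_n)}$, b-convergence gives $x\in B^{(l)}$, whence $d(y_l,x)\leq R/2^l\to 0$, i.e., $y_l\to x$. Moreover $1$-continuity shows that for every $z\ne x$ we have $d(y_l,z)\to d(x,z)>0$, which eventually exceeds $R/2^l$, so $\bigcap_l B^{(l)}=\{x\}$.

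Picking $n_1<n_2<\cdots$ with $n_l\in I_l$, I get a subsequence satisfying $d(x_{n_l},y_{l'})\leq R/2^{l'}$ for every $l'\leq l$; in particular $d(x_{n_l},y_l)\to 0$. Combining $y_l\to x$ with $d(x_{n_l},y_l)\to 0$ and the trapping of the $x_{n_l}$ in nested balls whose only common point is $x$, I conclude $d(x_{n_l},x)\to 0$, which gives the desired convergent subsequence. This closure is the step I expect to be the main obstacle: in a genuine semimetric setting one cannot pass directly from ``$x_{n_l}$ close to $y_l$'' and ``$y_l\to x$'' to ``$x_{n_l}\to x$'' without a triangle-type inequality, and the delicate interaction between $1$-continuity and the tightness forced by the nested balls (themselves produced by doubling) is what has to be exploited to close the argument.

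For the ``Moreover'' clause I would argue by contradiction: if the bounded b-convergent sequence $(x_n)$ failed to converge to $x$ in $d$, some subsequence $(x_{m_k})$ would satisfy $d(x_{m_k},x)\geq \varepsilon>0$. This subsequence is still bounded, and every ball containing infinitely many of its terms contains infinitely many $x_n$, hence contains $x$ by b-convergence of the full sequence. The first part of the proof thus applies to $(x_{m_k})$ and produces a further subsequence converging to $x$, contradicting $d(x_{m_k},x)\geq \varepsilon$.
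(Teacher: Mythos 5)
Your construction (iterated doubling, pigeonhole, nested index sets, diagonal subsequence) matches the combinatorial core of the paper's argument, but the proof is not closed: the step you yourself flag as ``the main obstacle'' is a genuine gap, and it cannot be bridged with $1$-continuity alone. Having $x_{n_l}\in B_{R/2^{l'}}(y_{l'})$ and $x\in B_{R/2^{l'}}(y_{l'})$ for all $l'\le l$ tells you that $x$ and $x_{n_l}$ are both within $R/2^{l'}$ of the moving centers $y_{l'}$, but in a bare semimetric space this puts no bound whatsoever on $d(x,x_{n_l})$. The fact that $\bigcap_l B^{(l)}=\{x\}$ is a purely set-theoretic statement and does not convert into $d(x_{n_l},x)\to 0$; and $1$-continuity only controls $d(y_l,w)$ for a \emph{fixed} $w$ as $l\to\infty$, whereas here the second argument $x_{n_l}$ varies with $l$ (you only control $d(y_{l'},x_{n_m})$ for $l'\le m$, which is useless in the limit $l'\to\infty$). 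So the argument genuinely stalls at the point where a triangle-type inequality is needed.

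What the paper does differently is precisely to supply that missing ingredient: it works (as the proof makes clear, even if the statement says only ``semimetric'') with the semitriangular function $g$ of a (G)-semimetric space, continuous at $(0,0)$. From $x,x_n\in B_{\frac{3}{2^{k+1}}r_0}(y^{(k)})$ it deduces $d(x,x_n)\le g\bigl(\tfrac{3}{2^{k+1}}r_0,\tfrac{3}{2^{k+1}}r_0\bigr)=:s_k\to 0$, i.e.\ it transfers the terms into balls \emph{centered at $x$ itself}. The whole proof is then organized around $r_0:=\inf\{r>0: B_r(x)\text{ contains infinitely many terms}\}$: the doubling argument shows $s_k<r_0$ for large $k$, contradicting $r_0>0$, and once $r_0=0$ the subsequence is read off trivially from the balls $B_{1/k}(x)$. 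If you import the (G)-condition and the estimate $d(x,x_{n_l})\le g(R/2^l,R/2^l)\to 0$ into your final step, your version closes and is essentially equivalent to the paper's. Your reduction of the ``moreover'' clause to the first part is sound in outline (and only uses the one-sided property that every ball with infinitely many terms of the subsequence contains $x$, which is all your first-part construction needs), but of course it inherits the same gap.
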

\begin{proof}
Our assumption can be restated as follows: 
\begin{center}
\textit{For every $y\in X$ and $r\in S$, if $B_r(y)$ contains infinitely many terms of $(x_n)${,} then $d(x,y)\leq r$.}
\end{center}
We {want to find a subsequence $(x_{n_k})$ of a sequence $(x_n)$ such that $d(x_{n_k},x)\to 0$}. Define
\[
r_0:=\inf \{ r>0 \, : \, B_r(x)  \mbox{ contains infinitely many terms of } (x_n)\}.
\]
At first, let us note that $r_0$ is finite. Indeed, there exists at least one ball of the form $B_s(y)$ (for some $y\in X$ and $s>0$) which contains infinitely many terms of $(x_n)$ as well as $x$. Hence for all $n\in \N$ such that $x_n\in B_s(y)$ we have:
\[
d(x_n,x)\leq g\left(d(x,y),d(y,x_n) \right) \leq g\left(d(x,y),s\right) =:\hat{r}.
\]

Therefore, $r_0\leq \hat{r}$ as $B_{\hat{r}}(x)$ contains infinitely many terms of $(x_n)$. Suppose that $r_0>0$. Thus, there exists a subsequence $(x_{n_k})$ of $(x_n)$ which satisfies $d(x_{n_k},x)\in \left[r_0,\frac{3}{2}r_0\right]$ (otherwise, $r_0$ could be lowered further). The doubling property guarantees that $B_{\frac{3}{2}r_0}(x)$ can be covered by a family of $N$ balls $\{B_{\frac{3}{4}r_0}(y_i^{(1)}) \, : \, i\leq N\}$. In particular, at least one such ball (denote it by $B_{\frac{3}{4}r_0}(y^{(1)})$) contains infinitely many elements of $(x_{n_k})$. From our $\B_S$-convergence assumption, such a ball contains $x$ as well.

This ball can also be covered by another set of balls with half the radius, i.e., balls of the form $B_{\frac{3}{8}r_0}(y_i^{(2)})$. One such ball is guaranteed to contain infinitely many terms of $(x_n)$, denote it by $B_{\frac{3}{8}r_0}(y^{(2)})$. It contains $x$ as well.

Proceeding inductively, we obtain a sequence of balls $B_{\frac{3}{2^{k+1}}r_0}(y^{(k)})$, each of these containing infinitely many terms of $(x_n)$ as well as $x$. Thus, $d(x,y^{(k)}) \to 0$. For all $n\in \N$ such that $x_n\in B_{\frac{3}{2^{k+1}}r_0}(y^{(k)})$ we have:
\begin{equation}\label{klopf}
d(x,x_n)\leq g\left(d(x,y^{(k)}),d(x_n,y^{(k)})\right)\leq g\left(\frac{3}{2^{k+1}}r_0,\frac{3}{2^{k+1}}r_0\right)=:s_k.
\end{equation}
As $\frac{3}{2^{k+1}}r_0\to 0$, we have $s_k\to 0$ from the continuity of $g$ {at} the origin. But since $s_k\to 0$, we have that $s_k<r_0$ for sufficiently large $k$. Thus $B_{s_k}(x)$ contains infinitely many terms of $(x_n)_{n\in\N}$, which contradicts our assumption that $r_0>0$. {Finally, pick $n_1=1$ and for $k >1$ define $n_{k} > n_{k-1}$ such that $x_{n_k} \in B_{\frac{1}{k}}(x)$. Since $r_0 = 0${,} we can always find such terms. We obtain $d(x_{n_k},x) \to 0$.}

{Now, assume that $(x_n)$ is bounded. We have that there is $y \in X$ and $s>0$ such that $B_s(y)$ contains all elements of $(x_n)$.

Suppose on the contrary that $d(x_n,x) \not\to 0$, that is, there are $r>0$ and a subsequence $(x_{n_k})$ of $(x_n)$ such that  $d(x_{n_k},x) > r$ for every $k\in \N$. 

The doubling property guarantees that $B_{s}(y)$ can be covered with a family of $N$ balls $\{B_{\frac{1}{2}s}(y_i^{(1)}) \, : \, i\leq N\}$. In particular, at least one such ball (denote it by $B_{\frac{1}{2}s}(y^{(1)})$) contains infinitely many elements of $(x_{n_k})$. From our $\B_S$-convergence assumption, such {a} ball contains $x$ as well.

Now, we can proceed similarly as in the first part of the proof to find a sequence of balls $(B_{\frac{1}{2^{j}}s}(y^{(j)}))$ each containing $x$ and infinitely many terms of $(x_{n_k})$. 
For all $k\in \N$ such that $x_{n_k}\in B_{\frac{1}{2^{j}}s}(y^{(j)})$ we have:
\begin{equation}\label{klopf2}
d(x,x_{n_k})\leq g\left(d(x,y^{(j)}),d(x_{n_k},y^{(j)})\right)\leq g\left(\frac{1}{2^{j}}s,\frac{1}{2^{j}}s\right)=:r_j.
\end{equation}
As $\frac{1}{2^{j}}s\to 0$, we have $r_j\to 0$ from the continuity of $g$ at the origin. But since $r_j\to 0$, there is $m \in \N$ such that $r_m<r$. There is also $x_{n_k} \in B_{\frac{1}{2^{m}}s}(y^{(m)})$ and, by (\ref{klopf2}), $d(x,x_{n_k}) < r$, a contradiction. Therefore, $d(x_{n},x) \to 0$.}
\end{proof}

The {theorem} above also holds even if we do not assume that $S=(0,+\infty)$ but merely require $S$ to satisfy $S^d\ni 0$. The proof gets trickier this time, because sometimes we are required to iterate the doubling property a sufficient number of times and multiply the radii of the so obtained balls by some positive constant $\rho\in(1,2)$. As $0\in S^d$, after iterating the doubling property sufficiently many times, there will be an $s_0 \in [\frac{3r_0}{2^{k+1}}, \frac{3r_0}{2^{k}})$ and we can increase the radii of our balls to $s_0$. Since balls of smaller radii are contained in the ones with same center but larger radii, we can inductively construct a sequence of balls as in the original proof. However, in our opinion this somewhat tedious reasoning can cloud the general idea behind the proof.

The following example shows that the assumption of {the} boundedness of $(x_n)$ is essential.
\begin{example}\label{PiotrusPan}
In the Euclidean metric space $(\R,d_e)$ define a sequence $(x_n)$ in the following way: for $k \in \N$ put $x_{2k} := k$ and $x_{2k-1} = \frac{1}{k}$. This sequence is obviously not convergent in $(\R,d_e)$. Consider the ball space $\B$ consisting of all compact intervals. Then $x_n\overset{\B}{\to} 0$. Indeed, it is easy to see that every compact interval containing infinitely many terms of $(x_n)$ must contain $0$. Moreover, for any $x \neq 0$ there exists an interval containing infinitely many terms of $(x_n)$ and not containing $x$ (it suffices to take $[0,\frac{|x|}{2}]$).  
(If we take as a ball space the family of all closed sets, then, by Theorem \ref{top}, we do not have ball convergence).
\end{example}

We clearly see that {the} building {of} a ball space upon some semimetric (or even metric) space $(X,d)$ using solely the closed balls leads to some pathological examples. We have asked ourselves the question, whether there exists a natural way of constructing a ball space out of $(X,d)$ which does not utilize all closed sets, but has more natural properties than $(X,\B_{(0,+\infty)})$. 

Let us consider a ball space $(X,\B_S^+)$, where $S\subseteq \R_+$ and 
\[
\B_S^+:=\B_S \cup \left\{ X \setminus { B^{\circ}_r(x)} \, : \, x \in X, r\in S \right\}.
\]

As previously, ${B^{\circ}_r(x)}:= \{ y \in X \, : \, d(x,y)<r \}$ denotes an open ball.

That is, the proposed \textit{{complemented} ball space} built upon $(X,d)$ consists of closed balls with radii in $S$, as well as the complements of open balls with radii from the same set $S$. This allows us to fix some problems similar to the one presented in Example \ref{PiotrusPan}. One can also see, that the statement of Theorem 5.3/Corollary 5.4 still holds if we replace $(X,\B_S)$ by $(X,\B_S^+)$.

\begin{theorem}
Let $(X,d)$ be a (G)-semimetric space, $(x_n)$ be a sequence in $X$ and $x\in X$. Consider a set $S\subset(0,+\infty)$ such that $0\in S^d$:
\begin{itemize}
\item[(1)]If $d$ is $1$-continuous and $(x_n)_{n\in\N}$ converges to $x$, it also {$\B_S^+$-converges} to $x$ in the {complemented} ball space $(X,\B_S^+)$.

\item[(2)] If $(x_n)_{n\in\N}$ $\B_S^+$-converges to $x$ in the {complemented} ball space $(X,\B_S^+)$, then it also converges to $x$ in $(X,d)$.
\end{itemize}
\end{theorem}
\begin{proof}
Ad (1) Let us assume that $(x_n)_{n\in\N}$ converges to $x$. As in the proof of {the} previous version of this theorem, {$\bigcap_{s\in S} B_s(x)=\{x\}$} and for any $y\in X$ and any $s\in S$ such that $B_s(y)$ contains infinitely many terms of $(x_n)$ we have $x\in B_s(y)$. Now it is enough to show { that if the complement of an open ball contains infinitely many terms of $(x_n)$, then it contains $x$ as well}. Take any $y\in X$ and $r\in S$. If {there is an infinite subsequence $(x_{k_n})_{n\in\N}$ of $(x_n)$ such that $x_{k_n} \notin {B^{\circ}_r(y)}$ for $n \in \N$}, then $d(x_{k_n},y)\geq r$ and, as a result of {the} $1$-continuity,
\[
d(x,y)=\lim_{n\to\infty} d(x_{k_n},y) \geq r.
\]
Hence, if the complement of an open ball contains infinitely many terms of $(x_{n})_{n\in\N}$, then it contains $x$ too.

Ad(2) {From the assumption that $(x_n)$ is convergent to $x$ we infer that
for every $y\in X$ and $r\in S$, if $B_r(y)$ contains infinitely many terms of $(x_n)$ then $d(x,y)\leq r$. Moreover, if for infinitely many terms of $(x_n)$ we have $d(x_n,y)\geq r$, then $d(x,y)\geq r$.}

{Now, on} the contrary, suppose that $(x_n)$ does not converge to $x$ in the semimetric sense. Then, there exists $\varepsilon>0$ such that $d(x,x_{n})\geq \varepsilon$ for infinitely many $n$. {Hence the complement of ${B^{\circ}_{\ve}(x)}$ contains infinitely many elements of $(x_{n})$. Therefore, $X\setminus {B^{\circ}_{\ve}(x)}$ should contain $x$ as well, a contradiction}.
\end{proof}

Clearly, {complemented} ball spaces seem to be a more natural ground for topological considerations. We can see, that the sequence from Example \ref{PiotrusPan} is not $\B_{(0,\infty)^+}$-convergent if we replace the ball space $(\R,\B_{(0,+\infty)})$ with the {complemented} version. We also see that the convergence in $(\R,d_d)$ (where $d_d$ stands for discrete metric) coincides with $\B_S^+$-convergence in $(\R,\B_{S}^+)$, where $S:=(0,1]$, as in both of these spaces, only the ultimately constant sequences are convergent. {However, we must be careful, because using complemented ball spaces may lead to {the} loss of spherical completeness. Therefore, a proper choice of a ball structure may vary depending on the situation.}

{In \cite{BKK}{,} the notion of ball continuity of functions between ball spaces was introduced. Let $(X,\B_x)$, $(Y,\B_Y)$ be ball spaces and consider a mapping $f\colon X \to Y$. We say that $f$ is ball continuous if for any $B \in \B_Y$ we have $f^{-1}(B) \in \B_X$. At the end of this section we would like to pose the following open problem.
\begin{problem}
Let $(X,\B_x)$, $(Y,\B_Y)$ be ball spaces and $f\colon X \to Y$. Under what assumptions on the ball spaces $(X,\B_x)$, $(Y,\B_Y)$ it is true that $f\colon X \to Y$ is ball continuous if and only if for any sequence $(x_n) \in X^{\N}$ such that   $x_n\overset{\B_X}{\to} x$, {with} $x \in X$, we have $f(x_n)\overset{\B_Y}{\to} f(x)$? 
\end{problem}
}

\appendix

\section*{Appendix}
In this appendix we want to make a slight detour from the main topic of the last section towards the notion of convergence -- but this time purely in semimetric spaces. It has been shown in several distinct papers that, in general, open balls in semimetric spaces are not necessarily open sets in the topology $\tau$ induced by sequence convergence in $(X,d)$. In particular, we refer the {reader} to \cite{ATD,PS}, as well as encourage them to see \cite[Theorem 5.6]{CJT1} which provides a huge range of such pathological spaces where no open ball is open. This is somewhat remedied by introducing the strong topology $\tau^d$ in the semimetric space $(X,d)$, whose basic open sets are the finite intersections of open balls, i.e.,
\[
\mathfrak{B}:=\left\{ { \bigcap_{i\leq n} {B^{\circ}_{r_i}(x_i)} \, : \, n\in\N, \, x_i \in X, \, r_i>0 \mbox{ for } i \leq n}\right\}. 
\]
Whenever $x_n$ converges to $x$ with respect to $\tau^d$ we will write $x_n\stronglyto x$. 
In this topology, however, $d(x_n,x)\overset{n\to \infty}{\longrightarrow} 0$ does not always imply that $x_n\stronglyto x$. We want to present this fact with an interesting example of a $b$-metric space\footnote{{This space happens to satisfy a stronger axiom, known as relaxed polygonal inequality, see \cite[Definition 12.3]{FPTIDS}.}} whose both topologies $\tau^d$ and $\tau$ are metrizable, but in distinct ways. 

\begin{example}
We shall consider the real line with the semimetric $d$ given by
\begin{equation*}
d(x,y):=\begin{cases}
|x-y|,& x,y\in \Q \, {\rm or} \, x,y\in \R\setminus \Q,\\
2|x-y|,& {\rm otherwise}.
\end{cases}
\end{equation*}
As we have the obvious Lipschitz equivalence between $d$ and the Euclidean metric $d_e$, the topology induced by sequence convergence in $(\R,d)$ coincides with the standard topology of the reals. What remains to be shown is that $\R$ equipped with $\tau^d$ is also metrizable.

Firstly, observe that an open ball in $(\R,d)$ is of the form
\[
{B^{\circ}_r(x)}:= \begin{cases}
\left(\left(x-r,x+r\right) \cap \Q \right) \cup \left(x-\frac{r}{2},x+\frac{r}{2} \right),& {\rm if }\, x\in \Q\\
\left(\left(x-r,x+r\right) \cap (\R\setminus \Q) \right) \cup \left(x-\frac{r}{2},x+\frac{r}{2} \right),& {\rm if }\, x\notin \Q.
\end{cases}
\]
Thus, each open ball resembles the planet Saturn -- we have a solid center, surrounded by a perforated ring.\footnote{The authors are aware of the fact that Saturn is a gas giant -- this fact makes the use of word \textit{solid} somewhat questionable.} Note that every open interval is also an open set in $(\R, \tau^d)$, as it can be represented as a {union} of tiny open balls which form a base of $\tau^d$. Moreover, each intersection of the form $(x-r,x+r)\cap \Q$ and $(x-r,x+r)\cap (\R\setminus\Q)$ is also an open set. To see that, pick $x\in \Q$ and consider the following intersection of open balls: \[
{B^{\circ}_{3r}(x+2r)}\cap {B^{\circ}_{3r}(x-2r)}= \left(x-r,x+r\right) \cap \Q .
\]
One can follow this reasoning to prove that all open intervals in $\Q$ (or in $\R\setminus \Q$) are open in $\tau^d$. We will now prove that $(\R,\tau^d)$ is homeomorphic to the subspace $L$ of $\R^2$ (equipped with standard metric) which is defined as
\[
L:=\left(\Q\times \{1\}\right) \cup \left( (\R\setminus \Q) \times \{0\} \right).
\]
The homeomorphism $f:\R\to L$ is given by $f(x):=(x,\chi_\Q(x))$, where $\chi_\Q$ is the indicator function of rationals. The fact that $f$ is a bijection is trivial. We will now show that both $f$ and its inverse are continuous. Fix $x\in \Q$ (the reasoning in the case where $x\in \R\setminus\Q$ is analogous). Take any open neighbourhood $A$ of $(x,1)\in L$. As $L$ inherits the natural topology from $\R^2$, there exists a rational $\varepsilon>0$ such that 
\[
A \supset ((x-\varepsilon, x+\varepsilon)\cap \Q) \times \{1\}.
\]
Put $B:=\left(x-\frac{\eps}{2},x+\frac{\eps}{2}\right)\cap \Q$. It is an open set (with respect to $\tau^d$) which satisfies $f(B)\subset A$. This proves that $f$ is continuous at $x$ -- since it was an arbitrary point, $f$ is continuous.

Let us move on to the inverse of $f$, which is $g(x,y) = x$. Fix $x \in \R$. Consider an open set $U\subset \R$ (in $\tau^d$) containing $x$. Since $U$ can be expressed as a union of base sets, we obtain:
\[
g^{-1}[U]=g^{-1}\left[\bigcup_{t\in J} \bigcap_{i\leq n_t} {B^{\circ}_{r_{t,i}}(x_{t,i})}\right]= \bigcup_{t\in J} \bigcap_{i\leq n_t} g^{-1}\left[ {B^{\circ}_{r_{t,i}}(x_{t,i})}\right],
\]
where $J$ stands for a (possibly uncountable) set of indices.
We will see that one of the intersections on the right hand side of this equality contains an image of a certain open neighbourhood of $(x,1)$. Fix $t_0 \in J$ such that $x\in \bigcap_{i\leq n_{t_0}} {B^{\circ}_{r_{t_0,i}}(x_{t_0,i})}$. Notice that if $x_{t_0,i}$ is rational, then \begin{align*}
&g^{-1}\left[{B^{\circ}_{r_{t_0,i}}(x_{t_0,i})} \right] =& \\&
\bigg(\bigg(\Q \cap \bigg( x_{t_0,i} - r_{t_0,i}, x_{t_0,i} + r_{t_0,i} \bigg) \bigg) \times \{1\}\bigg) \cup \left( \left(\left( x_{t_0,i} - \frac{1}{2}r_{t_0,i}, x_{t_0,i} + \frac{1}{2}r_{t_0,i} \right)\setminus \Q \right) \times \{0\}\right),&
\end{align*}

otherwise
\begin{align*}
&g^{-1}\left[{B^{\circ}_{r_{t_0,i}}(x_{t_0,i})} \right] =&\\ &\left(\left(\Q \cap \left( x_{t_0,i} - \frac{1}{2}r_{t_0,i}, x_{t_0,i} + \frac{1}{2}r_{t_0,i} \right) \right) \times \{1\}\right) \cup \bigg(\bigg(\bigg( x_{t_0,i} - r_{t_0,i}, x_{t_0,i} + r_{t_0,i} \bigg)\setminus \Q \bigg) \times \{0\}\bigg).&
\end{align*}

Regardless of whether $x\in \Q$ or not, $g^{-1}\left[{B^{\circ}_{r_{t_0,i}}(x_{t_0,i})} \right]$ remains {the} union of two open subsets of $L$, thus guaranteeing its openness. A finite intersection of such sets remains open, thus the preimage of $U$ is open. This proves the continuity of $f^{-1}$.

Therefore, $f$ is a homeomorphism between $L$ equipped with {the} natural topology and $(\R,\tau^d)$. This proves that $(\R,\tau^d)$ is metrizable. Since $L$ obviously is not homeomorphic to {the} real line with standard topology, we have that $\tau$ and $\tau^d$ are both metrizable in non-homeomorphic ways.
\end{example}

This example leaves us with the open question whether there exist some simple conditions under which topology $\tau^d$ is metrizable. The results which answer a similar question in the case of convergence-based topology can be found in \cite{CJT1, WAW} as well as in some of the references therein.

\section*{Acknowledgments}
Piotr Nowakowski was supported by the GA \v{C}R grant 20-22230L (Czech Science Foundation). The authors would also like to express their gratitude towards Professor Wiesław Kubi\'s, {Professor Marek Balcerzak} and Professor Jacek Jachymski for the inspiration and support. {The authors would also like to thank the referee for many relevant remarks and suggestions.}

\section*{Data Availability Statement}

The paper has no associated data.

\end{document}